\documentclass[runningheads]{llncs}
\usepackage{amsrefs,amssymb, amsmath, bbold}

\usepackage[T1]{fontenc}
\usepackage{graphicx}
\usepackage{tikz}
\usetikzlibrary{matrix, shapes.misc}
\usepackage{comment,tikz-cd}

\newtheorem{thm}{Theorem}
\newtheorem{lem}[thm]{Lemma}
\newtheorem{cor}[thm]{Corollary}
\newtheorem{defi}[thm]{Definition}

\newtheorem{nota}[thm]{Notation}

\newtheorem{observation}[thm]{Observation}

\newcommand\be{\begin{equation}}
\newcommand\ee{\end{equation}}

\usepackage{amsmath,amsfonts} 

\usepackage[applemac]{inputenc}

\def\bdefi{\begin{defi}}
\def\edefi{\end{defi}}
\def\bnota{\begin{nota}\rm}
\def\enota{\end{nota}}

\def\ZFC{\textup{\textsf{ZFC}}}

\def\osc{\textup{\textsf{osc}}}

\def\({\textup{(}}
\def\){\textup{)}}

\def\N{{\mathbb  N}}
\def\Q{{\mathbb  Q}}
\def\R{{\mathbb  R}}

\def\SSS{\textup{\textsf{S}}}

\def\di{\rightarrow}

\def\asa{\leftrightarrow}

\def\osc{\textup{\textsf{osc}}}

\def\SSS{\textup{\textsf{S}}}

\def\usco{\textup{\textsf{usco}}}

\def\reg{\textup{\textsf{regulated}}}

\def\reg{\textup{\textsf{reg}}}

\def\eps{\varepsilon}

\usepackage{hyperref}

\begin{document}
\title{On the computational properties of ambivalent sets and functions}
%
%
\author{Dag Normann\inst{1}\and Sam Sanders\inst{2}}
\authorrunning{D.\ Normann and S.\ Sanders}
%

\institute{Department of Mathematics, University of Oslo, Norway \email{dnormann@math.uio.no}\and Department of Philosophy II, RUB Bochum, Germany
\email{sasander@me.com} }
%


\setcounter{secnumdepth}{3} 


\maketitle              
\begin{abstract}
Examples of discontinuous functions already appear in the work of Euler, Abel, Dirichlet, Fourier, and Bolzano.  
A ground-breaking discovery due to Baire was that many discontinuous functions are \emph{well-behaved} in that they 
are the pointwise limit of a sequence of continuous functions; the latter form a class nowadays simply called `Baire~1'.  
We shall study a class strictly between the semi-continuous and Baire~1 functions, called the \emph{ambivalent} fuctions.  
In particular, we investigate the computational properties of the class of ambivalent functions and sets, denoted $\bf \Delta$, working with Kleene's S1-S9 schemes.   
Computational equivalences for various standard operations (supremum, Baire 1 representation, \dots) on $\bf \Delta$ are established, including the \emph{structure functional} $\Omega_{\bf \Delta}$ that decides if a given ambivalent set is non-empty.  
A selector is shown to be computable relative to $\Omega_{\bf \Delta}$ and Kleene's quantifier $\exists^{2}$.

\keywords{Kleene S1-S9  \and ambivalent sets and functions \and structure functionals \and real analysis}
\end{abstract}

\section{Introduction}
\subsection{Aim and motivation}
First of all, \emph{computability theory} is a discipline in the intersection of theoretical computer science and mathematical logic where the fundamental question is:
\begin{center}
\emph{given two objects $X $ and $Y$, is $X$ computable from $Y$ {in principle}?}
\end{center} 
Turing's famous `machine' model is the standard reference (\cite{tur37}) in case $X, Y$ are real numbers.  
Our computational study of third-order (and beyond) objects is based on Kleene's S1-S9 in an equivalent formulation (\cite{dagsamXIII, dagsamXV}) using fixed point operators going back to Platek (\cite{pphd}). 
An introduction is in Section \ref{prelim}.  

\smallskip

Secondly, the authors have already studied the computational properties of regulated (\cite{dagsamXIII}) and semi-continuous functions (\cite{dagsamXV}).  
Now, the Baire~1 functions constitute a well-known super-class of the latter with pleasing conceptual properties.  Indeed, the Baire 1 class encompasses a large swath of all discontinuous functions, as already observed by Baire himself (\cite{beren, beren2}). 
Moreover, a function is Baire 1 if it is the pointwise limit of continuous functions, i.e.\ an elementary description not alien to undergraduates.  
Motivated by these observations, we study the \emph{ambivalent} functions, a class strictly between the semi-continuous and Baire 1 functions.
The definition of the former (see Definition \ref{flungf}) can be viewed as a `hybrid' of the definition of Baire 1 and semi-continuity.     

\smallskip

Thirdly, the aforementioned computational study of regulated and semi-continuous functions revolves around so-called \emph{structure functionals}.  
The latter take as input a set $X$ from a given class $\Gamma$ and decide whether it is empty or not, as in the following specification:
\be\tag{$\Omega_{\Gamma}$}
\Omega_\Gamma(X) = \left\{ \begin{array}{ccc}1 & {\rm if} & X \in \Gamma \wedge X \neq \emptyset \\ 0 & {\rm if} & X \in \Gamma \wedge X = \emptyset \\ {\rm undefined} & &{\rm otherwise}  \end{array} \right..
\ee
Now, central to the study of regulated (resp.\ semi-continuous) functions is $\Omega_{b}$ (resp.\ $\Omega_{C}$), which is $\Omega_{\Gamma}$ where $\Gamma$ is the class of finite (resp.\ closed) sets of reals (\cite{dagsamXIII, dagsamXV}).  
For instance, computing the continuous function from the \emph{Urysohn lemma} is exactly as hard\footnote{We always assume that Kleene quantifier $\exists^{2}$ from Section \ref{kelim} is given.} as $\Omega_{C}$ while computing the monotone functions from the \emph{Jordan decomposition theorem} is exactly has hard as $\Omega_{b}$; we repeat that we interpret `computation' via Kleene's S1-S9. 

\smallskip

Finally, the class $\bf \Delta=\bf F_{\sigma}\cap \bf G_{\delta}$ of ambivalent sets (see Definition \ref{char}) generalises the open and closed sets.
A function is ambivalent if its level sets are ambivalent (see Definition \ref{flungf}).  We shall establish the following computational properties of ambivalent sets and functions.
\begin{itemize}
\item We obtain preliminary results for $\bf \Delta$ that are effective versions of theorems from the literature (Section \ref{zelfl})
\item We obtain a cluster theorem for the structure functional $\Omega_{\bf \Delta}$, i.e.\ a number of computational equivalences involving ambivalent functions (Section \ref{clijsterf}). 
\item We show that $\Omega_{\bf \Delta}$ computes a \emph{selector} for $\bf \Delta$-sets, i.e.\ the Axiom of Choice for the latter is effective (Section \ref{slef}). 
\end{itemize}
All required preliminaries may be found in Section \ref{kelim}.  
We add that the notion of ambivalent set goes back to de la Vall\'ee-Poussin circa 1916 (\cite{pussin}). i.e.\ the ambivalent sets have a certain history.

\subsection{Preliminaries and definitions}\label{kelim}
We briefly introduce Kleene's \emph{higher-order computability theory} in Section~\ref{prelim}.
We also introduce some essential axioms (Section~\ref{lll}) and definitions (Section~\ref{cdef}).  A full introduction may be found in e.g.\ \cite{dagsamX}*{\S2} or \cite{longmann}.

\smallskip

Since Kleene's computability theory borrows heavily from type theory, we use common notations from the latter; for instance, the natural numbers are type $0$ objects, denoted $n^{0}$ or $n\in \N$.  
Similarly, elements of Baire space are type $1$ objects, denoted $f\in \N^{\N}$ or $f^{1}$.  Mappings from Baire space $\N^{\N}$ to $\N$ are denoted $Y:\N^{\N}\di \N$ or $Y^{2}$. 
An overview of such notations can be found in e.g.\ \cite{longmann, dagsamXIII}. 

\subsubsection{Kleene's computability theory}\label{prelim}
Our main results are in computability theory and we make our notion of `computability' precise as follows.  
\begin{enumerate}
\item[(I)] We adopt $\ZFC$, i.e.\ Zermelo-Fraenkel set theory with the Axiom of Choice, as the official metatheory for all results, unless explicitly stated otherwise.
\item[(II)] We adopt Kleene's notion of \emph{higher-order computation} as given by his nine clauses S1-S9 (see \cite{longmann}*{Ch.\ 5} or \cite{kleeneS1S9}) as our official notion of `computable'.  We use the formulation based on fixed point operators from \cites{dagsamXIII, dagsamXV}.  
\end{enumerate}
We refer to \cites{longmann, dagsamXIII} for a thorough overview of higher-order computability theory.
We do mention the distinction between `normal' and `non-normal' functionals  based on the following definition from \cite{longmann}*{\S5.4}. 
We only make use of $\exists^{n}$ for $n=2,3$, as defined in Section \ref{lll}.
\bdefi\label{norma}
For $n\geq 2$, a functional of type $n$ is called \emph{normal} if it computes Kleene's quantifier $\exists^{n}$ following S1-S9, and \emph{non-normal} otherwise.  
\edefi
\noindent
It is a historical fact that higher-order computability theory, based on Kleene's S1-S9, has focused primarily on the world of \emph{normal} functionals; this observation can be found \cite{longmann}*{\S5.4} and can be explained by the (then) relative scarcity of interesting non-normal functionals, like the \emph{fan functional}, originally due to Kreisel (see \cite{dagtait} for historical details) and the Superjump due to Gandy.  
The authors have identified interesting \emph{non-normal} functionals, i.e.\ those that compute the objects claimed to exist by:
\begin{itemize}
\item covering theorems due to Heine-Borel, Vitali, and Lindel\"of (\cites{dagsamV}),
\item the Baire category theorem (\cite{dagsamVII, samcsl23}),
\item local-global principles like \emph{Pincherle's theorem} (\cite{dagsamV}),
\item weak fragments of the Axiom of (countable) Choice (\cite{dagsamIX}),
\item the Jordan decomposition theorem and related results (\cites{dagsamXII, dagsamXIII}),
\item the uncountability of $\R$ (\cites{dagsamX, dagsamXI}).
\end{itemize}
This paper continues the study of non-normal functionals that originate from basic properties of Baire 1 functions and sub-classes. 

\smallskip

Finally, the first example of a non-computable non-normal functional, Kreisel's (aka Tait's) fan functional (see \cite{dagtait}), is rather tame: it is computable in $\exists^{2}$. 
By contrast, the functionals based on the previous list are computable in $\exists^{3}$ but not computable in any $\SSS_{k}^{2}$, where the latter decides $\Pi_{k}^{1}$-formulas (see Section \ref{lll}).

\subsubsection{Some comprehension functionals}\label{lll}
In Turing-style computability theory, computational hardness is measured in terms of where the oracle set fits in the well-known comprehension hierarchy.  
For this reason, we introduce some functionals related to \emph{higher-order comprehension} in this section.
We are mostly dealing with \emph{conventional} comprehension here, i.e.\ only parameters over $\N$ and $\N^{\N}$ are allowed in formula classes like $\Pi_{k}^{1}$ and $\Sigma_{k}^{1}$.  

\smallskip
\noindent
First of all, \emph{Kleene's quantifier $\exists^{2}: \N^{\N}\di \{0,1\} $} is the functional satisfying: 
\be\label{muk}\tag{$\exists^{2}$}
(\forall f^{1})\big[(\exists n^{0})(f(n)=0) \asa \exists^{2}(f)=0    \big]. 
\ee
Clearly, $\exists^{2}$ is discontinuous at $f=11\dots$ in the usual epsilon-delta sense. 
In fact, given a discontinuous function on $\N^{\N}$ or $\R$, \emph{Grilliot's trick} computes $\exists^{2}$ from the former, via a rather low-level term of G\"odel's $T$ (see \cite{kohlenbach2}*{\S3}).
Moreover, $\exists^{2}$ computes \emph{Feferman's $\mu^{2}$} defined for any $f^{1}$ as follows:
\be\label{muk2}\tag{$\mu^{2}$}
\mu(f):=
\begin{cases}
n & \textup{if $n^{0}$ is the least natural number such that $f(n)=0$}\\
0 & \textup{if there are no $m^{0}$ such that $f(m)=0$}
\end{cases}.
\ee
Hilbert and Bernays formalise considerable swaths of mathematics using only $\mu^{2}$ (with that name) in \cite{hillebilly2}*{Supplement IV}.

\smallskip
\noindent
Secondly, the \emph{Suslin functional} $\SSS^{2}:\N^{\N}\di \{0,1\}$ (see \cites{kohlenbach2, avi2}) is the unique functional satisfying the following:
\be\label{muk3}\tag{$\SSS^{2}$}
  (\exists g^{1})(\forall n^{0})(f(\overline{g}n)=0)\asa \SSS(f)=0.
\ee
By definition, the Suslin functional $\SSS^{2}$ can decide whether a $\Sigma_{1}^{1}$-formula as in the left-hand side of \eqref{muk3} is true or false.   
We similarly define the functional $\SSS_{k}^{2}$ which decides the truth or falsity of $\Sigma_{k}^{1}$-formulas, given in their Kleene normal form (see e.g.\ \cite{simpson2}*{IV.1.4}).

\smallskip

\noindent
Thirdly,  \emph{Kleene's quantifier $\exists^{3}: (\N^{\N}\di \N)\di \{0,1\} $} is the functional satisfying: 
\be\label{muk4}\tag{$\exists^{3}$}
(\forall Y^{2})\big[  (\exists f^{1})(Y(f)=0)\asa \exists^{3}(Y)=0  \big].
\ee
Hilbert and Bernays introduce functionals in e.g.\ \cite{hillebilly2}*{Supplement IV, p.\ 479} that readily compute $\exists^{3}$.  

\subsubsection{Some definitions}\label{cdef}
We introduce some definitions needed in the below, mostly stemming from mainstream mathematics.
We note that subsets of $\R$ are given by their characteristic functions (Definition \ref{char}), where the latter are common in measure and probability theory.

\smallskip
\noindent
First of all, we make use of the usual definition of (open) set, where $B(x, r)$ is the open ball with radius $r>0$ centred at $x\in \R$.
Note that `RM' stands for `Reverse Mathematics', as the associated representations are used there. 
\bdefi[Set]\label{char}~
\begin{itemize}
\item Subsets $A$ of $ \R$ are given by their characteristic function $F_{A}:\R\di \{0,1\}$, i.e.\ we write $x\in A$ for $ F_{A}(x)=1$ for all $x\in \R$.
\item We write `$A\subset B$' if we have $F_{A}(x)\leq F_{B}(x)$ for all $x\in \R$. 
\item A subset $O\subset \R$ is \emph{open} in case $x\in O$ implies that there is $k\in \N$ such that $B(x, \frac{1}{2^{k}})\subset O$.
\item A subset $F\subset \R$ is \emph{closed} if the complement $\R\setminus F$ is open. 
\item Any $D\subset \R$ is $\bf{F}_{\sigma}$ if it equals $\cup_{n\in \N}F_{n}$ for a sequence $(F_{n})_{n\in \N}$ of closed sets. 
\item Any $D\subset \R$ is $\bf{G}_{\delta}$ if it equals $\cap_{n\in \N}O_{n}$ for a sequence $(O_{n})_{n\in \N}$ of open sets. 
\item We define the class $\bf \Delta$ as $\bf F_{\sigma}\cap G_{\delta}$.  
A set $X\in \bf \Delta$ is called \emph{ambivalent} \(\cites{omaoma, omaoma2}\), \emph{bivalent} \(\cite{dugudugu}\), or \emph{ambiguous} \(\cite{kura, elekes}\).
\item A subset $O\subset \R$ is \emph{RM-open} if there are sequences $(a_{n})_{n\in \N}, (b_{n})_{n\in \N}$ of reals such that $O=\cup_{n\in \N}(a_{n}, b_{n})$.
\item A subset $C\subset \R$ is \emph{RM-closed} if the complement $\R\setminus C$ is RM-open.  
\item An RM-code or representation for $X\in \bf \Delta$ is a sequence of RM-codes of open and closed sets $(O_{n}, F_{n})_{n\in \N}$ with $X=\cap_{n\in \N} O_{n}=\cup_{n\in \N}F_{n}$.  
\end{itemize}
\edefi
We remark that the class of RM-codes for ${\bf \Delta}$-sets is complete $\Pi^1_1$, as is the canonical class of RM-codes for Baire 1 functions.
\noindent

\smallskip

No computational data/additional representation is assumed in our definition of open set.  
As established in \cites{dagsamXII, dagsamXIII, samBIG}, one readily comes across closed sets in basic real analysis (Fourier series) that come with no additional representation. 

\smallskip

Secondly, the following sets are often crucial in proofs in real analysis. 
\bdefi
The sets $C_{f}$ and $D_{f}$ respectively gather the points where $f:\R\di \R$ is continuous and discontinuous.
\edefi
One problem with $C_{f}, D_{f}$ is that `$x\in C_{f}$' involves quantifiers over $\R$.  
In general, deciding whether a given $\R\di \R$-function is continuous at a given real, is as hard as $\exists^{3}$ from Section \ref{lll}.
For these reasons, the sets $C_{f}, D_{f}$ do exist in general, but are not computable in e.g.\ $\exists^{2}$.  For quasi-continuous and semi-continuous functions, these sets \emph{are} definable in $\exists^{2}$ by \cite{dagsamXIII}*{\S2} or \cite{samBIG2}*{Theorem 2.4}.  

\smallskip

Thirdly, we shall study the following notions, many of which are well-known and hark back to Baire, Darboux, Hankel, and Volterra (\cites{beren,beren2,darb, volaarde2,hankelwoot,hankelijkheid}).  
\bdefi\label{flung} 
For $f:\R\di \R$, we have the following definitions:
\begin{itemize}
\item $f$ is \emph{upper semi-continuous} at $x_{0}\in \R$ if for any $k\in \N$, there is $N\in \N$ such that $(\forall y\in B(x_{0}, \frac{1}{2^{N}}))( f(y)< f(x_{0})+\frac{1}{2^{k}} )$, 
\item $f$ is \emph{lower semi-continuous} at $x_{0}\in \R$ if for any $k\in \N$, there is $N\in \N$ such that $(\forall y\in B(x_{0}, \frac{1}{2^{N}}))( f(y)> f(x_{0})-\frac{1}{2^{k}} )$, 
\item $f$ is \emph{regulated} if for every $x_{0}$ in the domain, the `left' and `right' limit $f(x_{0}-)=\lim_{x\di x_{0}-}f(x)$ and $f(x_{0}+)=\lim_{x\di x_{0}+}f(x)$ exist.  
\item $f$ is \emph{Baire 0} if it is a continuous function. 
\item $f$ is \emph{Baire $n+1$} if it is the pointwise limit of a sequence of Baire $n$ functions.
\end{itemize}
\edefi
As to notations, a common abbreviation is `usco' and `lsco' for the first two items.  
Moreover, if a function has a certain weak continuity property at all reals in $\R$ (or its intended domain), we say that the function has that property.  
We shall generally study $\R\di \R$-functions but note that `usco' and `lsco' are also well-defined for $f:[0,1]\di \overline{\R}$ where $\overline{\R}=\R\cup\{+\infty, -\infty\}$ involves 
two special symbols that satisfy $(\forall x\in \R)(-\infty <_{\R} x <_{\R}+\infty )$ by fiat.     

\smallskip

Next, we list the following equivalent definitions for usco and Baire 1 functions, which motivate the definition of ambivalent function.
\bdefi\label{flungf}
For $f:\R\di \R$, we have the following definitions:
\begin{itemize}
\item $f:\R\di \R$ is \emph{usco} in case $\{x\in \R:f(x)\geq a\}$ is closed for any $a\in \R$,
\item $f:\R\di \R$ is \emph{Baire 1} in case $f^{-1}(V)$ is $\bf{F}_{\sigma}$ for any open $V\subset \R$ \(\cite{myerson,overderooie}\),
\item $f:\R\di \R$ is \emph{ambivalent} in case $\{x\in \R:f(x)<a\}$ and $\{x\in \R:f(x)>a\}$ are $\bf{\Delta}$ for any $a\in \R$ \(\cites{omaoma, omaoma2}\).  
\end{itemize}
\edefi

\section{A cluster theorem}
We study the structure functional for ambivalent sets, called $\Omega_{\bf \Delta}$, and establish a cluster theorem (Section \ref{clijsterf}). 
The required preliminaries are in Section \ref{zelfl}.
\subsection{Preliminaries}\label{zelfl}
We obtain some preliminary results needed for Section \ref{clijsterf}, which are often effective versions of known results from the literature. 

\smallskip

First of all, the following is a partial effective version of \cite{overderooie}*{Lemma 11.6}.  
\begin{lem}\label{trux2}
Let $X\in \bf \Delta$ be given. 
Then $\exists^{2}$ computes a Baire 1 representation for $\mathbb{1}_{X}$ from an RM-code of $X$.
\end{lem}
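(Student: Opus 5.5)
We must build, uniformly in $\exists^{2}$ and the RM-code $(O_{n},F_{n})_{n\in\N}$ of $X$, a sequence of continuous functions $g_{m}:\R\di\R$ with $\lim_{m\to\infty}g_{m}(x)=\mathbb{1}_{X}(x)$ for all $x\in\R$. The idea is to realise $\mathbb{1}_{X}$ as a limit of indicator-like continuous functions approximating the closed sets $F_{n}$ from the inside and the open sets $O_{n}$ from the outside. Without loss of generality, I first normalise the code so that the sequences are monotone: replace $F_{n}$ by $\cup_{i\leq n}F_{i}$, which is still RM-closed, and $O_{n}$ by $\cap_{i\leq n}O_{i}$, which is still RM-open. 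Both operations on RM-codes are computable, and $\exists^{2}$ suffices to manipulate the codes. Then $F_{0}\subset F_{1}\subset\dots$, $O_{0}\supset O_{1}\supset\dots$, $F_{n}\subset X\subset O_{n}$, and $X=\cup_{n}F_{n}=\cap_{n}O_{n}$.

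Next, I would use the RM-codes to compute the distance functions $d(x,F_{n})$ and $d(x,\R\setminus O_{n})$, which are continuous. With $\exists^{2}$, the value $d(x,C)$ for an RM-closed $C$ with complement $\cup_{k}(a_{k},b_{k})$ is obtainable as an infimum over rationals not covered by the intervals, which is arithmetical in the code and hence $\exists^{2}$-computable. Some care is needed with emptiness, which is decidable by $\exists^{2}$ since it is an arithmetical question about the code; in the empty case one uses the constant functions $0$ or $1$. I would then define the continuous function
\[
g_{n}(x):=\frac{d(x,\R\setminus O_{n})}{d(x,\R\setminus O_{n})+d(x,F_{n})},
\]
which is well-defined because $F_{n}\subset O_{n}$ makes the denominator positive. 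It equals $1$ on $F_{n}$, equals $0$ outside $O_{n}$, and lies in $[0,1]$.

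It remains to verify pointwise convergence. If $x\in X$, then $x\in F_{n}$ for all large $n$ by monotonicity, so $g_{n}(x)=1$ eventually. If $x\notin X$, then $x\notin O_{n}$ for all large $n$, so $g_{n}(x)=0$ eventually. Hence $g_{n}\to\mathbb{1}_{X}$ pointwise, and this is in fact eventual equality. The map from the code to the sequence $(g_{n})_{n\in\N}$ is uniform, so it is computable in $\exists^{2}$. If the Baire~1 representation is required to consist of codes of continuous functions (e.g. values on rationals plus a modulus), I would produce these, since $g_{n}$ is locally Lipschitz away from the zero set of the denominator. Simpler still is to note that $\exists^{2}$ yields the $g_{n}$ as type-two objects directly.

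The main obstacle is the uniform computation of the distance functions from RM-codes, together with the degenerate cases where $F_{n}$ or $\R\setminus O_{n}$ is empty. I expect this to be routine with $\exists^{2}$, via rational approximations and $\mu^{2}$ searches.
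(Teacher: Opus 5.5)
Your construction is the paper's: after making $(F_n)_{n\in\N}$ increasing and $(O_n)_{n\in\N}$ decreasing (the paper assumes this, you justify it), the paper uses exactly $d(x,\R\setminus O_n)/(d(x,F_n)+d(x,\R\setminus O_n))$, and your convergence argument is the intended one. However, the step you give for computing $d(x,C)$ fails as written, where $C$ is RM-closed with complement $\cup_k(a_k,b_k)$. The infimum of $|x-q|$ over rationals $q$ not covered by the intervals is $d(x,C\cap\Q)$. This is not $d(x,C)$ in general: the two disagree at some $x$ whenever the rational points of $C$ are not dense in $C$.

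For instance, the closed (hence ambivalent) set $X=\{\sqrt{2}\}$ has an RM-code with $F_n=X$ for all $n$, whose complement is coded by the intervals $(\sqrt{2}-k,\sqrt{2})$ and $(\sqrt{2},\sqrt{2}+k)$, $k\in\N$. No rational lies in $F_n$, so your recipe takes an infimum over the empty set. It never produces $d(\sqrt{2},F_n)=0$, which is exactly the value that forces $g_n(\sqrt{2})=1$. So your $g_n(\sqrt{2})$ does not converge to $\mathbb{1}_X(\sqrt{2})=1$.

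The repair is what the paper calls ``the usual interval-halving technique'', and it relies on compactness rather than on rational points. By Heine--Borel, $C\cap[u,v]=\emptyset$ if and only if $(\exists m\in\N)([u,v]\subset\cup_{k\le m}(a_k,b_k))$. This condition is arithmetical in the code and the reals $u,v$, hence decidable by $\exists^{2}$. Since $C$ is closed, $d(x,C)\le r$ if and only if $C\cap[x-r,x+r]\neq\emptyset$. So $\exists^{2}$ computes $d(x,C)$ by bisection, or by $\mu$-searches over rational $r$.

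Your emptiness test is the special case $[u,v]=[-n,n]$ for all $n$, and is fine. With this replacement, the rest of your argument goes through unchanged, including the degenerate cases.
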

\begin{proof}
First of all, we recall that the (continuous) distance function $d(x, C)=\inf_{y\in C}d(x, y)$ exists in case $C$ is RM-closed; the usual interval-halving technique goes through using $\exists^{2}$.
Now, suppose $X=\cap_{n\in \N}G_{n}=\cup_{m\in \N}F_{m}$ where the $G_{n}$ (resp.\ $F_{m}$) are 
open (resp.\ closed) sets given via RM codes and where $G_{n+1}\subseteq G_{n}$ and $F_{m}\subseteq F_{m+1} $.  
 Use $\exists^{2}$ to define the continuous function $f_{n}(x):=\frac{d(x, \R\setminus G_{n})}{d(x, F_{n})+d(x, \R\setminus G_{n})}$ from these codes.  
 Now observe that $\lim_{n\di \infty }f_{n}(x)=\mathbb{1}_{X}(x)$ for all $x\in \R$, as required.
 \qed
\end{proof}
Secondly, the following lemma is an effective version of \cite{overderooie}*{Lemma 11.7}.
\begin{lem}\label{trux3}
Let $(F_{n,m})_{n,m\in \N}$ be a double sequence of continuous functions, let $f_{n}$ be $\lim_{m\di \infty}F_{n, m}$, and suppose $(f_{n})_{n\in \N}$ uniformly converges to $f$.  
Then $\exists^{2}$ computes a Baire 1 representation for $f$ in terms of the double sequence. 
\end{lem}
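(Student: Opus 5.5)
The plan follows the classical proof of \cite{overderooie}*{Lemma 11.7}: pass to a fast subsequence, telescope, truncate, and diagonalise. The extra work is to make every choice explicit using only $\exists^{2}$ and the given double sequence. As in Lemma \ref{trux2}, I will use freely that $\exists^{2}$ evaluates $f_{n}(x)=\lim_{m\di\infty}F_{n,m}(x)$ pointwise from the double sequence, since the limit is known to exist. I will also use that truncations $T_{\eps}(y):=\max(-\eps,\min(\eps,y))$ of continuous functions are continuous and given by an explicit term.

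\textbf{Step 1 (the core construction).} Suppose we have indices $n_{0}<n_{1}<\dots$ with $\sup_{x}|f_{n_{k+1}}(x)-f_{n_{k}}(x)|\leq 2^{-k}$. Put $g_{k}:=f_{n_{k+1}}-f_{n_{k}}$. Then $g_{k}=\lim_{m}(F_{n_{k+1},m}-F_{n_{k},m})$, and since $|g_{k}|\leq 2^{-k}$ also
\[
g_{k}=\lim_{m}H_{k,m},\qquad H_{k,m}:=T_{2^{-k}}(F_{n_{k+1},m}-F_{n_{k},m}).
\]
Define the continuous functions
\[
\Phi_{m}:=F_{n_{0},m}+\textstyle\sum_{k<m}H_{k,m}.
\]
Given $\eps>0$ and $x$, choose $K$ with $\sum_{k\geq K}2^{-k+1}<\eps$. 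For $m\geq K$ the tail $\sum_{K\leq k<m}H_{k,m}(x)$ and the tail $\sum_{k\geq K}g_{k}(x)$ are both bounded by $\eps$. The finitely many terms with $k<K$, together with $F_{n_{0},m}(x)$, converge to $f_{n_{0}}(x)+\sum_{k<K}g_{k}(x)$. Hence $\Phi_{m}(x)\di f_{n_{0}}(x)+\sum_{k}g_{k}(x)=\lim_{k}f_{n_{k}}(x)=f(x)$, so $(\Phi_{m})_{m\in\N}$ is a Baire 1 representation for $f$. The sequence $(\Phi_{m})$ is obtained from $(F_{n,m})$ and $(n_{k})$ by a term uniformly in $m$, so this part is routine.

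\textbf{Step 2 (the obstacle: computing $(n_{k})$ from the double sequence alone).} The statement only provides the double sequence, so $(n_{k})$ must itself be computed by $\exists^{2}$. Naively deciding whether $\sup_{x}|f_{n}(x)-f_{l}(x)|\leq 2^{-k}$ quantifies over all reals a condition on a Baire 1 function, which is $\Pi^{1}_{1}$ and out of reach of $\exists^{2}$. I would therefore avoid deciding this exact property and aim for a condition that is arithmetical in the codes and satisfies two requirements. First, it must be \emph{implied} by a slightly stronger uniform bound, so that a witness exists by the uniform convergence hypothesis. Second, it must itself \emph{imply} a slightly weaker uniform bound, say $3\cdot 2^{-k}$, which is all Step 1 needs. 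My first attempt is to check the bound only on rationals and only for the approximants: $|F_{n,m}(q)-F_{l,m}(q)|\leq 2^{-k}$ for all $q\in\Q$ and all $m\geq M$, for some $M$. By continuity of the $F$'s this transfers from $\Q$ to all reals, and then $|f_{n}-f_{l}|\leq 2^{-k}$ everywhere follows on passing to the limit. With $\exists^{2}$ one searches for the least $(l,M)$ witnessing this, and defines $n_{k+1}$ from it.

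The difficulty is the converse: a uniform bound on the limits need not hold for the approximants on a tail. If this fails, my fallback is to first replace the double sequence by a modified one $\tilde F_{n,m}$ with the same limits $f_{n}$, built by mixing in truncated differences $T_{2^{-k}}(F_{n,m}-F_{l,m})$. The goal is that uniform bounds on the limits do become visible in the approximants. I expect the whole proof to hinge on this arithmetisation step. Once $(n_{k})$ is computed, Step 1 finishes the argument.
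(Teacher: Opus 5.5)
Your Step~1 is the paper's construction: fast subsequence, telescoping, truncating the differences $H_{n,m}$, and a diagonal limit. Your finite sums $\sum_{k<m}H_{k,m}$ are a harmless variant of the paper's infinite sum with the $M$-test.

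\textbf{The gap is Step~2.} You never actually compute $(n_k)$, and your reason for abandoning the direct route is wrong. The condition $\sup_{x\in\R}|f_n(x)-f_l(x)|\le 2^{-k}$ is syntactically $\Pi^1_1$, but $\exists^2$ decides it from the double sequence. The reason is that $(F_{n,m}-F_{l,m})_{m\in\N}$ is a Baire~1 representation of $f_n-f_l$, and $\exists^2$ computes suprema of Baire~1 functions from such representations. The paper itself uses this fact in Lemma~\ref{trux5} and Theorem~\ref{txno2}. Concretely:
\begin{itemize}
\item $|f_n(x)-f_l(x)|>c$ iff there are $r,N$ with $|F_{n,m}(x)-F_{l,m}(x)|\ge c+2^{-r}$ for all $m\ge N$.
\item Hence $(\exists x\in\R)(|f_n(x)-f_l(x)|>c)$ iff there are $j,r,N$ such that the closed set $\{x\in[-j,j]:(\forall m\ge N)(|F_{n,m}(x)-F_{l,m}(x)|\ge c+2^{-r})\}$ is non-empty.
\item By Heine--Borel, that set is non-empty iff for every $M\ge N$ the continuous function $\min_{N\le m\le M}|F_{n,m}-F_{l,m}|$ has maximum at least $c+2^{-r}$ on $[-j,j]$. $\exists^2$ decides this via the rationals.
\end{itemize}
This is the same compactness move as behind \eqref{kafff}. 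So $\mu^{2}$ can take $n_k$ to be the least $n>n_{k-1}$ with $\sup_{x}|f_n(x)-f_l(x)|\le 2^{-k}$ for all $l\ge n$. Such an $n$ exists by uniform convergence, and your Step~1 then finishes the proof.

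Neither of your own attempts closes this gap.
\begin{itemize}
\item \emph{The rational test on approximants fails outright, not merely in the converse direction.} Let $f_n=0$ for all $n$ and let $F_{n,m}$ be the tent of height $1$ supported on $[\frac{1}{n+m+1},\frac{1}{n+m}]$. Then $F_{n,m}\di 0$ pointwise. But for $n\ne l$ the supports of $F_{n,m}$ and $F_{l,m}$ have disjoint interiors, so $|F_{n,m}(q)-F_{l,m}(q)|=1$ at the rational midpoint of the former, for every $m$. Your search for $(l,M)$ therefore never halts.
\item \emph{The fallback is circular.} $T_{2^{-k}}(F_{n,m}-F_{l,m})$ converges to $T_{2^{-k}}(f_n-f_l)$. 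This equals $f_n-f_l$ only when the uniform bound you are trying to detect already holds. So mixing it in preserves the limits $f_n$ only if you already know the answer.
\end{itemize}
The paper's one-line justification (`restricting to $\Q$ and the $\eps/3$-trick') is terse for exactly the reason you noticed: the $f_n$ are only Baire~1. The compactness argument above is what makes this step go through.
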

\begin{proof}
That $f$ is Baire 1 follows from \cite{overderooie}*{Lemmas 11.7}.
The Baire 1 representation of $f$ is defined as follows:  since $(f_{n})_{n\in \N}$ converges uniformly to $f$, there is a sub-sequence $(g_{n})_{n\in \N}$ with 
\[\textstyle
(\forall n\in \N, x\in \R)(|f(x)-g_{n}(x)|<\frac{1}{2^{n}}  ).
\]
This subsequence can be computed using $\exists^{2}$ by restricting to $\Q$ and using the usual `$\eps/3$-trick'. 
Similarly, let $(G_{n,m})_{n,m\in \N}$ be the associated sub-sequence of $(F_{n,m})_{n,m\in \N}$.
Define $h_{n}=g_{n+1}-g_{n}$ and observe that $f=g_{1}+\sum_{n=1}^{\infty}h_{n}$ and $|h_{n}(x)|<\frac{1}{2^{n}}$ for all $x\in \R$ and $n\in \N$.
Clearly $H_{n, m}:=G_{n+1, m }-G_{n, m}$ converges to $h_{n}$ for $m\di \infty$.  
{If necessary}, modify $H_{n, m}$ to guarantee that, similar to $h_{n}$, we have $H_{n, m}(x)\leq \frac{1}{2^{n}}$ for all $m,n\in \N$ and $x\in \R$.  
Then $\lambda m.\lambda x.\sum_{n=1}^{\infty} H_{n, m}(x)$ is a sequence of continuous functions by Weierstrass' $M$-test.  
Moreover, this sequence converges to $\sum_{n=1}^{\infty}h_{n}$, as required, since $f=g_{1}+\sum_{n=1}^{\infty}h_{n}$.  
\qed
\end{proof}
Thirdly, we obtain the following version of the Baire characterisation theorem.
\begin{lem}\label{trux5}
Let $(f_{n})_{n\in \N}$ be a sequence of continuous functions with pointwise limit $f$ and $C$ a closed set.
Then $\exists^{2}$ computes a point of continuity $x\in C$ for $f_{\upharpoonright C}$ in terms of the sequence and an RM-code of $C$. 
\end{lem}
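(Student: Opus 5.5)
The plan is to effectivise the classical Baire category proof of the fact that the restriction of a Baire 1 function to a closed set has a point of continuity. We may assume $C$ is non-empty; with $\exists^{2}$ and the RM-code we can decide whether a rational interval meets $C$, since for RM-closed $C$ the distance function $d(x,C)$ is computable by the proof of Lemma \ref{trux2}. Fix $\eps=\frac{1}{2^{k}}$ and define
\[\textstyle
E_{N}:=\{x\in C: (\forall n,m\geq N)(|f_{n}(x)-f_{m}(x)|\leq \eps)\}.
\]
Each $E_{N}$ is closed, since it is an intersection of closed sets obtained from continuous functions. Moreover $C=\cup_{N}E_{N}$ by pointwise convergence. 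The condition `$I\cap E_{N}\neq\emptyset$' for a rational interval $I$ is not obviously decidable. So instead of finding a point of $E_{N}$ directly, we search for a rational interval $J$ with $J\cap C\neq\emptyset$ and $J\cap C\subseteq E_{N}$. This inclusion is a $\Pi^{0}_{1}$-condition in the parameters, because we quantify over rationals in $J$ close to $C$ and use continuity of the $f_{n}$. Hence $\exists^{2}$ decides it.

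The key steps are as follows. First, given a rational interval $I$ with $I\cap C\neq\emptyset$, the Baire category theorem for the complete space $I\cap C$ guarantees some $N$ and a rational subinterval $J\subseteq I$ with $J\cap C\neq \emptyset$ and $J\cap C\subseteq E_{N}$. Second, we find such $(N,J)$ by an unbounded search over $\N\times\Q^{2}$ using $\mu^{2}$. We also shrink $J$ so that its closure lies in $I$ and it has length at most $\frac{1}{2^{k}}$. On $J\cap C$ all $f_{n}$ with $n\geq N$ are within $\eps$ of $f_{N}$, so $|f-f_{N}|\leq \eps$ there. By continuity of $f_{N}$ (using $\exists^{2}$ and rationals) we further shrink $J$ so that $f_{N}$ oscillates by at most $\eps$ on $\overline{J}\cap C$. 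Then the oscillation of $f_{\upharpoonright C}$ on $J\cap C$ is at most $3\eps$.

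Iterating with $k=0,1,2,\dots$ gives a nested sequence of rational intervals $I_{0}\supseteq \overline{I_{1}}\supseteq I_{1}\supseteq\dots$. Each $I_{k}$ meets $C$, and their lengths tend to zero. Thus $\exists^{2}$ computes the limit $x$ of, e.g., the left endpoints. Since each $\overline{I_{k}}\cap C$ is a non-empty compact set and these sets are nested, $x\in C$. One must make sure that $x$ lies in the interior of each $I_{k}$ relative to the construction, which the closure condition $\overline{I_{k+1}}\subseteq I_{k}$ ensures. It follows that the oscillation of $f_{\upharpoonright C}$ at $x$ is at most $3\cdot 2^{-k}$ for every $k$, so $x$ is a point of continuity.

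The main obstacle is the decidability claim in the search step, namely checking `$J\cap C\neq\emptyset$' and `$J\cap C\subseteq E_{N}$' with $\exists^{2}$ alone. For the first condition, we use $d(x,C)$ evaluated at the rational points of $J$. For the second, we reduce to rational points: a point of $J\cap C$ fails to lie in $E_{N}$ if and only if there are $n,m\geq N$ with $|f_{n}(y)-f_{m}(y)|>\eps$ at some $y\in J\cap C$. By continuity this is witnessed near $y$ by rationals $q$ whose distance to $C$ is small, and a limit argument turns it into an arithmetical condition. If this approximation proves delicate, we can replace $E_{N}$ by a slightly enlarged version, using $\leq 2\eps$ on rational neighbourhoods; this still suffices for the oscillation bound.
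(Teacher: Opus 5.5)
Your proof is correct, but it takes a different route from the paper. The paper works with the oscillation of $f$ itself. It cites that $\exists^{2}$ computes $\sup_{x\in[p,q]}f(x)$ from a Baire 1 representation, and from this obtains $\osc_{f}$ and RM-codes for the open sets $C_{n}=\{x:\osc_{f}(x)<2^{-n}\}$. It then uses the classical fact that the complements $D_{n}$ are nowhere dense, applies the known effective Baire category theorem for RM-codes, and remarks in one line that the same works relative to $C$.

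You instead cover $C$ by the closed sets $E_{N}$ on which the sequence is uniformly $\eps$-Cauchy, and search for portions $J\cap C\subseteq E_{N}$. You control the oscillation of $f_{\upharpoonright C}$ through the single continuous function $f_{N}$ and build the nested intervals by hand. This is essentially an effectivisation of the classical argument behind the nowhere-density of the $D_{n}$, which the paper takes for granted.

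What each buys: your version is self-contained, needs neither suprema of Baire 1 functions nor an external effective Baire category theorem, and treats the restriction to $C$ explicitly. The paper's version is shorter modulo citations and yields more, namely codes for the whole comeagre $G_{\delta}$-set of continuity points rather than a single point.

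Two small points on your decidability step. First, `$J\cap C\subseteq E_{N}$' is arithmetical rather than $\Pi^{0}_{1}$, which is all $\exists^{2}$ needs. Second, your limit argument has an endpoint issue, which disappears if you write the negation of the inclusion as: `there are $n,m\geq N$, a rational $\eta>0$ and a rational interval $I'$ with $\overline{I'}\subseteq J$, $I'\cap C\neq\emptyset$ and $|f_{n}(r)-f_{m}(r)|\geq \eps+\eta$ for all rationals $r\in \overline{I'}$'.
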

\begin{proof}
Given a Baire 1 representation of $f$, $\exists^{2}$ computes $\sup_{x\in [p, q]}f(x)$ as a sequence over $p, q\in \Q$ (\cite{dagsamXIV}*{\S2}).  
Hence, one readily obtains the oscillation function $\osc_{f}(x)$, defined as follows:
\[\textstyle
\textup{$\osc_{f}([a,b]):= \sup _{{x\in [a,b]}}f(x)-\inf _{{x\in [a,b]}}f(x)$ and $\osc_{f}(x):=\lim _{k \di \infty }\osc_{f}(B(x, \frac{1}{2^{k}}) ).$}
\]
We now have $D_{f}=\cup_{n\in \N}D_{n}$ where $D_{n}:= \{  x\in \R: \osc_{f}(x)\geq \frac{1}{2^{n}} \}$ is closed and nowhere dense. 
The Baire category theorem guarantees that $C_{f}=\cap_{n\in \N}C_{n}  \ne \emptyset$ where $C_{n}$ is the complement of $D_{n}$.
Moreover, given the Baire 1 representation of $f$, $\exists^{2}$ computes the point claimed to exist by the Baire category theorem (\cite{dagsamVII}*{\S6}).
The same works relative to any RM-closed $C$, and we are done.
\qed
\end{proof}
We believe that many results from \cite{overderooie} can be similarly made effective, yielding interesting results regarding $\Omega_{\bf F_{\sigma}}$. 
 
\subsection{Some equivalences}\label{clijsterf}
We identify some operations on ambivalent sets and functions that are exactly as hard as $\Omega_{\bf \Delta}$.  
To make this precise, we introduce the $\Omega_{\bf \Delta}$-cluster (Definition~\ref{specs}). 

\smallskip

First of all,  as in \cites{dagsamXIII, dagsamXV}, we now introduce the following equivalence class. 
\bdefi\label{specs}
We say that \emph{the functional $\Phi^{3}$ belongs to the $\Omega_{\bf \Delta}$-cluster} in case 
\begin{itemize}
\item the combination $\Phi+\exists^{2}$ computes $\Omega_{\bf \Delta}$, and 
\item the combination $\Omega_{\bf \Delta}+\exists^{2}$ computes $\Phi$. 
\end{itemize}
We say that \emph{the functionals $\Phi$ and $\Omega_{\bf \Delta}$ are computationally equivalent given $\exists^{2}$}.
\edefi
Secondly, to avoid complicated definitions and domain restrictions, we will sometimes abuse notation and make statements of the form
\begin{center}
\emph{any functional $\Psi$ satisfying a given specification $\textsf{\textup{(A)}}$ belongs to the $\Omega_{\bf \Delta}$-cluster.}
\end{center}
The centred statement means that for \textbf{any} functional $\Psi_{0}$ satisfying the given specification $\textsf{(A)}$, the combination $\Psi_{0}+\exists^{2}$ computes $\Omega_{\bf \Delta}$, while $\Omega_{\bf \Delta}+\exists^{2}$ computes \textbf{some} functional $\Psi_{1}$ satisfying the specification $\textsf{(A)}$.  

\smallskip

Thirdly, we now have the following `cluster theorem' for $\Omega_{\bf \Delta}$. 
We note that the third item generalises the Urysohn lemma while the fifth item is based on the pre-image characterisation of ambivalent functions. 
\begin{thm}\label{txno2}
The following are part of the $\Omega_{\bf \Delta}$-cluster.  
\begin{itemize}
\item Any functional $\Phi_{0}$ such that for $X\in \bf \Delta$, $\Phi_{0}(X)$ is an RM-code for $X\subset [0,1]$. 
\item Any functional $\Phi_{1}$ such that for $X\in \bf \Delta$, the sequence $\Phi_{1}(X)=(f_{n})_{n\in \N}$ is a Baire 1 representation of $\mathbb{1}_{X}$ \(\cite{overderooie}*{Lemma 11.6}\). 
\item \(Urysohn\) Any functional $\Phi_{1b}$ such that for disjoint $X, Y\in \bf \Delta$, $\Phi_{1b}(X, Y)$ is a Baire 1 function plus representation which is $0$ on $X$ and $1$ on $Y$ \(\cite{omaoma2}\).  
\item Any functional $\Phi_{2}$ such that for ambivalent and bounded $f:[0,1]\di \R$, the real $\Phi_{2}(f, p, q)$ equals $\sup_{x\in [p, q]}f(x)$ for any $p, q\in \Q$. 
\item Any functional $\Phi_{2b}$ such that for ambivalent $f:\R\di \R$ and open $V\in \R$, $\Phi_{2b}( f, V)$ equals an RM-code for the ambivalent set $f^{-1}(V)$.  
\item Any functional $\Phi_{3}$ such that for ambivalent and bounded $f:[0,1]\di \R$, the sequence $\Phi_{3}(f)$ equals a Baire 1 representation for $f$. 
\item Any functional $\Phi_{4}$ such that for $X\in \bf \Delta $, the real $\Phi_{4}(X)$ equals $\sup X$.
\item Any functional $\Phi_{5}$ such that for $X\in \bf \Delta $ and continuous $f:[0,1]\di \R$, the real $\Phi_{5}(X, f)$ equals $\sup_{x\in X}f(x)$.
\end{itemize}
\end{thm}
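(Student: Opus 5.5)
The proof is organised as a cycle of reductions, each relative to $\exists^{2}$. The central link is that $\Omega_{\bf \Delta}+\exists^{2}$ computes $\Phi_{0}$, i.e.\ an RM-code from the characteristic function alone. Everything else is then routed through $\Phi_{0}$ and the effective lemmas of Section~\ref{zelfl}.

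\emph{Computing $\Omega_{\bf \Delta}$ from each $\Phi_i$.}
\begin{itemize}
\item $\Phi_{4}$: the set $X\cap[0,1]$ is ambivalent, and $X\cap[0,1]\ne\emptyset$ iff $\Phi_4(X\cup\{2\})<2$. Singletons and finite unions are ambivalent, so this decides emptiness; intervals $[-N,N]$ handle unbounded $X$.
\item $\Phi_{5}$: take $f$ constant $1$ on $X\cup\{2\}$ and test as for $\Phi_4$. Alternatively use $f(x)=x$.
\item $\Phi_{2}$: it reduces to $\Phi_{4}$-style queries, since $\mathbb{1}_X$ is ambivalent and $\sup_{[0,1]}\mathbb{1}_X=1$ iff $X\cap[0,1]\neq\emptyset$.
\item $\Phi_{2b}$: apply it to $f=\mathbb{1}_X$ and $V=(1/2,2)$ to get an RM-code of $X$.
\item $\Phi_{3}$ and $\Phi_{1}$: apply them to $\mathbb{1}_X$.
\item $\Phi_{1b}$: apply it to $(X,\emptyset)$ and then to $(\emptyset,X)$, or to $(X,\{$suitable point$\})$, to obtain a Baire~1 function equal to $0$ on $X$. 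I will need care here, since the function on the complement of $X$ is unconstrained. Instead I will call $\Phi_{1b}(X\cap A,\, X^{c}\cap A)$ for ambivalent $A$, noting that complements of ambivalent sets are ambivalent. This yields $g$ with $g=\mathbb{1}_{X^c}$ on $A$, so a Baire~1 representation of $\mathbb{1}_X$ up to restriction.
\end{itemize}
Once we have a Baire~1 representation $(f_n)$ of $\mathbb{1}_X$, $\exists^{2}$ computes $\sup_{x\in[p,q]}\mathbb{1}_X(x)$ by \cite{dagsamXIV}*{\S2}, as recalled in the proof of Lemma~\ref{trux5}, and this decides emptiness. Likewise, an RM-code gives a Baire~1 representation via Lemma~\ref{trux2}.

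\emph{Computing each $\Phi_i$ from $\Omega_{\bf \Delta}+\exists^{2}$.} The first step, and the main obstacle, is $\Phi_{0}$. For rationals $p<q$, the set $X\cap(p,q)$ is ambivalent, so $\Omega_{\bf \Delta}$ decides $X\cap(p,q)\neq\emptyset$. This already gives the closure $\overline{X}$ as an RM-closed set, and dually $\overline{X^{c}}$. To obtain the full $F_\sigma$/$G_\delta$ decomposition, I would use the Baire-type transfinite analysis for ambivalent sets, i.e.\ the Hausdorff difference hierarchy: $X$ is a countable-ordinal difference of closed sets, built by iterating $P\mapsto \overline{P\cap X}\cap\overline{P\setminus X}$. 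Each stage is ambivalent relative to the previous one, so $\Omega_{\bf \Delta}$ computes it as an RM-closed set. The iteration stops at a countable ordinal, when the stage becomes empty; this is guaranteed by Baire category, since $X$ and $X^c$ are both $G_\delta$ and cannot both be dense in a nonempty closed set.

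The hard part is running this transfinite recursion within S1--S9 without $\exists^{3}$. I would try to code the stages along a well-ordering of $\N$ obtained by a Kleene-recursion fixed point, or alternatively extract a point of the current stage via Lemma~\ref{trux5} and use a Cantor--Bendixson-style search.

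From the RM-code, Lemma~\ref{trux2} gives $\Phi_1$ and, by taking $X\cup Y$, also $\Phi_{1b}$. For $\Phi_{2b}$, write $V$ as a union of rational intervals and use the level sets $\{f>a\},\{f<b\}$, which are ambivalent, coding them with $\Phi_0$ and combining the codes with $\exists^2$. For $\Phi_3$, approximate the bounded $f$ uniformly by step functions $\sum_k \frac{k}{2^n}\mathbb{1}_{\{k/2^n\le f<(k+1)/2^n\}}$ whose pieces are ambivalent, take Baire~1 representations via $\Phi_1$, and apply Lemma~\ref{trux3}. Finally, $\Phi_2$, $\Phi_4$ and $\Phi_5$ follow by interval-halving with $\Omega_{\bf \Delta}$ on the ambivalent sets $\{f>a\}\cap[p,q]$, $X\cap(a,\infty)$, and $X\cap f^{-1}((a,\infty))$ respectively.
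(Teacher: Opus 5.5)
There is a genuine gap in the one non-trivial step, and you flag it yourself: showing that $\Omega_{\bf\Delta}+\exists^2$ computes $\Phi_0$. Your treatment of $\Phi_1$, $\Phi_{1b}$, $\Phi_{2b}$ and $\Phi_3$ also rests on this step.

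You describe the right transfinite derivative, which is essentially Definition~\ref{trandsfi}, and you are right that each successor step is computable. The difficulty is running the recursion as a whole:
\begin{itemize}
\item its length is a countable ordinal that depends on $X$ and can be arbitrarily large below $\omega_1$;
\item no well-ordering of $\N$ of that type is available;
\item limit stages need the entire earlier sequence.
\end{itemize}
``A well-ordering obtained by a Kleene-recursion fixed point'' is not an argument as it stands. A self-referential definition of the stage comparison would have to apply $\Omega_{\bf\Delta}$ to sets defined from the not-yet-computed relation, but in S1--S9 a functional can only be applied to a total argument. Lemma~\ref{trux5} does not help either, since it presupposes a Baire~1 representation, which is exactly what you are trying to produce.

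The missing idea is to characterise the outcome of the recursion uniquely instead of running it. Call a pre-ordering $\preceq$ of $\N$ \emph{agreeable} if, for every $a$ and with $X_a=\R\setminus\bigcup_{b\prec a}B_b$, either $B_a\cap X_a=\emptyset$ or $B_a\cap X_a$ is contained in $X$ or in $X^c$.
\begin{itemize}
\item Each clause is decidable in $\Omega_{\bf\Delta}+\exists^2$. For example, $B_a\cap X_a\subseteq X$ holds iff the ambivalent set $B_a\cap X_a\cap X^c$ is empty. Hence the class of agreeable pre-orderings is decidable.
\item Lemma~\ref{lemma.one}, using the Baire-category Lemma~\ref{lemma.baire}, shows that this class has exactly one element: the pre-well-ordering produced by the recursion.
\item Singletons are closed, hence ambivalent, so $\Omega_{\bf\Delta}+\exists^2$ computes the functional $\Omega_1$ by interval-halving.
\item Applying $\Omega_1$ to this singleton class yields the pre-well-ordering. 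From it, ${\bf F}_\sigma$-codes for $X$ and $X^c$ are read off arithmetically, with $\Omega_{\bf\Delta}$ deciding which segments go into which.
\end{itemize}
Without this uniqueness-plus-selection step, or some substitute, your cycle of reductions is not closed.

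The rest of your plan is essentially the paper's. Your direct interval-halving for $\Phi_2,\Phi_4,\Phi_5$ is correct and is simpler than the paper's route, since it avoids RM-codes. It applies $\Omega_{\bf\Delta}$ to the ambivalent sets $\{f>a\}\cap[p,q]$, $X\cap(a,\infty)$ and $X\cap f^{-1}((a,\infty))$. There are two small slips:
\begin{itemize}
\item $\sup(X\cup\{2\})\geq 2$ always holds, so your $\Phi_4$-test should adjoin a point below instead. For example, $X\cap[0,1]\neq\emptyset$ iff $\Phi_4((X\cap[0,1])\cup\{-1\})\geq 0$.
\item Starting from an RM-code, $\Phi_{1b}(X,Y)$ should be a representation of $\mathbb{1}_Y$, not of $\mathbb{1}_{X\cup Y}$.
\end{itemize}
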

\begin{proof}
First of all, that $\Omega_{\bf \Delta}+\exists^{2}$ computes $\Phi_{0}$ is proved in Theorem \ref{5.11}.

\smallskip

Secondly, assume a functional $\Phi_{0}$ is given and fix $X\in \bf \Delta$.  Use the former to write $X= \cup_{n\in \N}F_{n}=\cap_{n\in \N}G_{n}$ where $F_{n}$ is open, $G_{n}$ is closed, and all are given by RM-codes.  
By Lemma \ref{trux2}, a Baire representation of $\mathbb{1}_{X}$ can be computed, and a functional $\Phi_{1}$ is thus computable from $\Phi_{0}+\exists^{2}$.
Now, $\exists^{2}$ computes the supremum of a Baire 1 function assuming a given Baire 1 representation by \cite{dagsamXIV}*{Theorem 2.6}.  
Hence, $\Phi_{1}$ is seen to compute $\Omega_{\bf \Delta}$ as $(\exists x\in \R)(x\in X)\asa (\exists n\in \N)[ \sup_{X\in [-n,n]} \mathbb{1}_{X}=1]$.  
Clearly, for $X\in \bf \Delta$, $X^{c}$ is disjoint from the former and also ambivalent.  Hence, $\Phi_{1b}$ computes $\Phi_{1}$, and vice versa, as required.  

\smallskip

Thirdly, to compute $\Phi_{2}$ from $\Omega_{\bf \Delta}$, let $(q_{n})_{n\in \N}$ be an enumeration of $\Q$, fix ambivalent $f:\R\di \R$, and use $\Omega_{\bf \Delta}$ to obtain sequences of reals $(a_{n, m,k})_{n,m,k\in \N}$ and $(b_{n, m,k})_{n,m,k\in \N}$ such that $\{ x \in [0,1] : f(x)<q_{k}\}=  \cap_{n\in \N}G_{n, k} $ with $G_{n, k}=\cup_{m\in \N}(a_{n,m,k}, b_{n,m,k})$.  
Clearly, we have
\begin{align}
&(\forall x\in [0,1])(f(x)<q_{k})\notag \\
&\asa  (\forall n\in \N)([0,1]\subset G_{n,k} )\notag\\
&\asa (\forall n\in \N)(\exists m_{0}\in \N)([0,1]\subset \cup_{m\leq m_{0}}(a_{n,m,k}, b_{n,m,k})),\label{kafff}
\end{align}
where the last step follows by (countable) Heine-Borel compactness.  The final formula \eqref{kafff} is (equivalent to) arithmetical, i.e.\ we can now find the supremum of $f$ using the usual interval-halving method (using $\exists^{2}$).   
To compute $\Omega_{\bf\Delta}$ from $\Phi_{2}$, one readily verifies that $\mathbb{1}_{X}$ is ambivalent for $X\in \bf \Delta$ (by a simple case distinction).  
The supremum as in the latter functional then readily yields the former functional.  For $\Phi_{2b}$, the latter is clearly computable from $\Phi_{0}$ while RM-codes for $f^{-}((q, +\infty))$ readily yield $\sup_{x\in [0,1]}f(x)$ using a formula similar to \eqref{kafff}; hence $\Phi_{2}$ follows, as required.  
 
\smallskip

Fourth, to compute $\Phi_{3}$ from $\Phi_{0}$, we proceed as follows.  Let $f:[0,1]\di \R$ be ambivalent with upper bound $m-1\in \N$ and lower bound $0$, and consider 
for $i<m$ the sets
\[\textstyle
A_{2i}^{m}:= \{ x\in [0,1]: i/m <f(x)< (i+1)/m  \}, A_{2i+1}^{m}:= \{x\in [0,1]: f(x)=i/m\}.
\] 
Each $A_{j}^{m}$ is ambivalent because $f$ is and these sets are pairwise disjoint.    
Now define $g_{m}:\R\di [0,1]$ as follows: $g_{m}(x):=\sum_{i=0}^{m}\mathbb{1}_{A_{i}^{m}}(x)\frac{i}{m}$.
By known results (\cite{overderooie}*{Lemma 11.6-7}), each function $g_{m}$ is Baire 1 and the uniform limit is a Baire 1 function, namely the original $f$.
We can thus use Lemmas \ref{trux2} and \ref{trux3} to obtain a Baire 1 representation of $f$, as required. 
To compute $\Phi_{2}$ form $\Phi_{3}$, we recall that $\exists^{2}$ computes the supremum of a Baire 1 function assuming a given Baire 1 representation by \cite{dagsamXIV}*{Theorem 2.6}.

\smallskip

To compute $\Phi_{4}$ from $\Phi_{0}$, fix $X\in \bf \Delta$ and let $\cup_{n\in \N}C_{n}=X$ where each $C_{n}$ is an RM-code for a closed set.  Then $\exists^{2}$ computes $\sup C_{n}$ and we put $\Phi_{4}(X):= \sup_{n\in \N}\sup C_{n}$.  
To compute $\Omega_{\bf \Delta}$ from $\Phi_{4}$, consider $X\in \bf \Delta$ and check if $0\in X$, if so, then $\Omega_{\bf \Delta}(X)=1$.  If not, then consider the ambivalent set $\{0\} \cup X$. Then $X$ is non-empty if and only if the supremum of the latter set is non-zero.  For $\Phi_{5}$, the latter computes $\Phi_{4}$ in case $f$ is the identity function.  To compute the former, proceed as for $\Phi_{4}$: fix $X\in \bf \Delta$ and let $\cup_{n\in \N}C_{n}=X$ where each $C_{n}$ is an RM-code for a closed set.  Then $\exists^{2}$ computes $\sup_{x\in C_{n}}f(x)$ and we put $\Phi_{5}(X, f):= \sup_{n\in \N}\sup_{x\in C_{n}}f(x)$.    
%
%
\qed
\end{proof}
In conclusion, various basic operations on ambivalent functions and sets are computationally equivalent.  
We believe there to be many more inhabitants of the $\Omega_{\bf \Delta}$-cluster.  

\subsection{Gauges and Baire 1 functions} 
 We introduce the equivalent `gauge' definition of Baire 1 functions (Definition~\ref{leepangpang}) and connect it to our structure functionals (Theorem \ref{txno}).
 
 \smallskip
 
First of all, we consider the following equivalent definition of Baire 1 function pioneered in \cite{leebaire}. We refer to the function $\delta$ as an \emph{$\eps$-gauge} of a Baire 1 function.  

\bdefi\label{leepangpang}
A function $f:\R\di \R$ is Baire 1 if for any $\eps>0$, there is $\delta:\R\di \R^{+}$ such that for all $x, y\in \R$, if $|x-y|<\min(\delta(x), \delta(y))$ then $|f(x)-f(y)|<\eps$.
\edefi
Secondly, obtaining gauges is easy from the computational viewpoint, as follows. 
\begin{thm}\label{trux4}
Let $(f_{n})_{n\in \N}$ be a sequence of continuous functions with pointwise limit $f$.
Then $\exists^{2}$ computes a gauge representation for $f$ in terms of the sequence. 
\end{thm}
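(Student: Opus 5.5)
Given the sequence $(f_n)_{n\in\N}$ with $f=\lim_n f_n$ and a fixed $\eps=\frac{1}{2^{k}}$, I will define the gauge $\delta_{k}(x)$ in terms of a \emph{stabilisation index} and a local modulus of continuity, both of which $\exists^{2}$ can find because the $f_n$ are continuous. First, define $N(x):=$ the least $N\in\N$ such that $(\forall m\geq N)(|f_m(x)-f(x)|\le \frac{1}{2^{k+2}})$. For fixed $x$ this condition is arithmetical in the reals $f_m(x)$ and $f(x)$, and $f(x)=\lim_m f_m(x)$ is computable from the sequence using $\exists^{2}$. Hence $\mu^{2}$ yields $N(x)$.

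A gauge for $\eps$ cannot rely only on the stabilisation index at $x$, because $y$ may stabilise much later. The classical proof (Lee--Tang--Zhao) first decomposes $\R$ into pieces on which $f$ is close to a single continuous function. For $N\in\N$, set
\[\textstyle E_N:=\{x\in\R:(\forall m,m'\geq N)(|f_m(x)-f_{m'}(x)|\le \frac{1}{2^{k+2}})\}.\]
Each $E_N$ is closed, being an intersection of closed sets defined from continuous functions. The sets increase and their union is $\R$. For $x\in\R$, let $N(x)$ be the least $N$ with $x\in E_N$; $\mu^{2}$ computes this, since membership in $E_N$ is arithmetical given codes for the $f_m$. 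On $E_N$ we have $|f-f_N|\le\frac{1}{2^{k+2}}$.

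Next I define $\delta(x)$ by cases. Let $n=N(x)$, so that $x\in E_n\setminus E_{n-1}$. Since $E_{n-1}$ is closed and $x\notin E_{n-1}$, there is a radius $r_1>0$ with $B(x,r_1)\cap E_{n-1}=\emptyset$. This radius can be found as $2^{-j}$ for the least suitable $j$: the condition ``$B(x,2^{-j})\cap E_{n-1}=\emptyset$'' is equivalent to a statement over rationals via continuity of the $f_m$, so it is arithmetical and $\exists^{2}$ decides it. Similarly, by continuity of $f_n$ at $x$, pick the least $j$ with $|f_n(y)-f_n(x)|<\frac{1}{2^{k+2}}$ for all $y\in B(x,2^{-j})$, again deciding this over rationals. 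Let $\delta(x)$ be the minimum of these two radii.

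Verification: suppose $|x-y|<\min(\delta(x),\delta(y))$ and write $n=N(x)$, $n'=N(y)$. By symmetry assume $n\le n'$. Since $y\in B(x,\delta(x))$, $y\notin E_{n-1}$, so $n'\geq n$. Since $x\in B(y,\delta(y))$, $x\notin E_{n'-1}$, so $n\geq n'$. Thus $n=n'$, and both $x,y\in E_n$. Then
\[|f(x)-f(y)|\le |f(x)-f_n(x)|+|f_n(x)-f_n(y)|+|f_n(y)-f(y)|<\tfrac{3}{2^{k+2}}<\eps.\]
The mapping $k\mapsto\delta_k$ is therefore computable in $\exists^{2}$ from the sequence.

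The main obstacle is making sure every search above is genuinely arithmetical rather than a quantifier over $\R$. The decisive point is that closedness of $E_{n-1}$ and continuity of $f_n$ let each condition of the form ``$(\forall y\in B(x,r))$'' be replaced by a quantifier over rationals in the ball. For $\R\di\R$ continuous functions this substitution needs $\exists^{2}$ to evaluate them at rationals, which is standard.
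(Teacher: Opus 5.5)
Your construction is correct, but it takes a more roundabout route than the paper. The paper makes part of Lee--Tang--Zhao effective using only a continuity modulus $\delta_{n}(x,\eps)$ for each $f_{n}$ and a convergence modulus $\eta(x,\eps)$, both obtained with $\exists^{2}$. It takes $\delta_{\eps}(x)$ to be the continuity modulus at $x$ of the $f_{n}$ with $n\leq\eta(x,\eps)$. The printed $\max_{1\leq \eta(x,\eps)}$ has to be read as $\min_{n\leq\eta(x,\eps)}$, and even $\delta_{\eta(x,\eps)}(x,\eps)$ alone suffices. To check it, suppose $|x-y|<\min(\delta_{\eps}(x),\delta_{\eps}(y))$ and, say, $\eta(y,\eps)\geq\eta(x,\eps)$. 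Then $n:=\eta(y,\eps)$ makes $|f(x)-f_{n}(x)|$, $|f_{n}(x)-f_{n}(y)|$ and $|f_{n}(y)-f(y)|$ all smaller than $\eps$; the middle bound holds because $|x-y|<\delta_{n}(y,\eps)$. So your claim that a gauge cannot rely only on the stabilisation index at $x$ is mistaken. The $\min$ in the gauge condition lets whichever point stabilises later supply the modulus. In fact the $N(x)$ of your first paragraph, combined with your continuity radius for $f_{N(x)}$, is in essence the paper's proof. Your layering through the increasing closed sets $E_{N}$ gives the stronger conclusion $N(x)=N(y)$. This is the same idea one uses when $f$ comes with a countable closed cover on whose pieces it has small oscillation, rather than with an approximating sequence. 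The cost is the extra search for a radius avoiding $E_{N(x)-1}$, and there your justification is too quick. The condition $B(x,2^{-j})\cap E_{n-1}=\emptyset$ has the form $(\forall y)(\exists m,m')$ and cannot simply be tested on rationals, since an irrational point of $E_{n-1}$ would be missed. Use closed balls and compactness instead: $\overline{B}(x,2^{-j})$ misses $E_{n-1}$ iff for some $M$ the continuous function $\max_{n-1\leq m,m'\leq M}|f_{m}-f_{m'}|$ has infimum $>2^{-(k+2)}$ over the rationals of that ball, which is arithmetical. With this fix and the routine $<$ versus $\leq$ adjustments when passing to rationals, your gauge is computable in $\exists^{2}$ and your verification goes through.
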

\begin{proof}
Part of the proof of \cite{leebaire}*{Theorem 1} is effective, as follows. 
Let $(f_{n})_{n\in \N}$ be a sequence of continuous functions with pointwise limit $f$.  
Then $\exists^{2}$ readily computes a modulus of continuity $\delta_{n}(x, \eps)$, i.e.\ we have
\[
(\forall \eps>0, \forall n\in \N)(\forall x, y \in \R)( |x-y|<\delta_{n}(x, \eps)\di |f_{n}(x)-f_{n}(y)|<\eps ).  
\]
Similarly, let $\eta(x, \eps)$ be a modulus for the convergence of $f_{n}$ to $f$, i.e.\ 
\[
(\forall \eps>0, \forall x\in \R)(\forall n\geq \eta(x, \eps))(|f_{n}(x)-f(x)|<\eps). 
\]
Define $\delta_{\eps}(x):=  \max_{1\leq \eta(x, \eps)} \delta_{n}(x, \eps)$ and verify it is an $\eps$-gauge for $f$. \qed
\end{proof}
Thirdly, we connect the gauge definition to structure functionals in Theorem \ref{txno} below.  To this end, we need the following definition from \cite{dagsam} where the authors pioneered the computational study of Cousin's lemma (\cite{cousin1}).
\bdefi
Any $\Theta:(\R\di \R)\di \R$ is called a \emph{realiser for Cousin's lemma} if for any $\Psi[0,1]\di \R^{+}$, $\Theta(\Psi)=(x_{0}, \dots, x_{k})$ is such that $[0,1]\subset \cup_{i\leq k} B(x_{i}, \Psi(x_{i}))$. 
\edefi
Let $\Phi_{X}:((\R\di \R)\times \N) \di (\R\di \R)$ be such that $\Phi_{\Gamma}(f, k)$ is a $\frac{1}{2^{k}}$-gauge for $f$ in the class\footnote{We use `$\reg$' and `$\usco$' to denote the classes of regulated and usco functions.} $\Gamma$.  
The following theorem shows that computing $\eps$-gauges amounts to computing structure functionals, modulo a realiser for Cousin's lemma. 
\begin{thm}\label{txno}\smallskip
Let $\Theta$ be any realiser for the Cousin lemma. 
\begin{itemize}
\item The combination $\Phi_{\reg}+\Theta+\exists^{2}$ computes $\Omega_{b}$.  
\item The combination $\Phi_{\usco}+\Theta+\exists^{2}$ computes $\Omega_{C}$.  
\item The combination $\Phi_{\bf \Delta}+\Theta+\exists^{2}$ computes $\Omega_{\bf \Delta}$.  
\end{itemize}
\end{thm}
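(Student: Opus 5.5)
Given $X$ in the relevant class $\Gamma$ (finite sets, closed sets, or $\bf\Delta$-sets), we work with the indicator function $\mathbb{1}_X$ and decide $X\neq\emptyset$ from a gauge.

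First we check that $\mathbb{1}_X$ lies in the matching function class. For finite $X$ it is regulated; for closed $X$ it is usco, since $\{x:\mathbb{1}_X(x)\geq a\}$ is $\R$, $X$ or $\emptyset$; for $X\in\bf\Delta$ it is ambivalent, by the case distinction already used in the proof of Theorem~\ref{txno2}. So $\delta:=\Phi_\Gamma(\mathbb{1}_X,1)$ is a $\frac12$-gauge. By the definition of a gauge, if $|x-y|<\min(\delta(x),\delta(y))$ then $\mathbb{1}_X(x)=\mathbb{1}_X(y)$.

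Next we reduce to a finite search on each interval $[-n,n]$. By rescaling we may assume we work on $[0,1]$, and then take the union over $n$ using $\exists^2$ on the countable index. Apply $\Theta$ to $\Psi(x):=\delta(x)/2$ (rescaled appropriately) to obtain finitely many points $x_0,\dots,x_k$ with $[0,1]\subset\cup_{i\le k}B(x_i,\delta(x_i)/2)$.

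The main obstacle is to show that $X\cap[0,1]\neq\emptyset$ if and only if some $x_i\in X$; the direction from right to left is trivial. For the other direction, suppose $y\in X\cap[0,1]$ with $y\in B(x_i,\delta(x_i)/2)$. The gauge inequality requires $|x_i-y|<\min(\delta(x_i),\delta(y))$, but $\delta(y)$ may be tiny, so the naive argument fails. The fix I would try first is to feed $\Theta$ a modified gauge that does not use $\delta$ alone. Concretely, apply $\Theta$ to the gauge $\Psi'$ defined by $\Psi'(x)=\delta(x)$ if $x\notin X$, and $\Psi'(x)=1$ if $x\in X$. This is computable from $\mathbb{1}_X$ and $\delta$. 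If $X\cap[0,1]=\emptyset$, then no $x_i$ lies in $X$, trivially. Conversely, if no $x_i$ lies in $X$, then every $y\in X\cap[0,1]$ lies in some $B(x_i,\delta(x_i))$ with $x_i\notin X$. To derive a contradiction we need more: combining Cousin with the gauge, for $y$ close to $x_i$ with $\delta(y)>|x_i-y|$ we obtain $\mathbb{1}_X(y)=0$. I would therefore run the cover argument with $\Psi'(x)=\min(\delta(x),\cdot)$ and use the symmetric condition by also covering from $y$'s side. The cleanest route is a chain argument: apply $\Theta$ to $\Psi'$ restricted to $X\cup X^c$, and note that if a cover found only points of $X^c$, then applying $\Theta$ again to $\delta$ restricted by $\mathbb{1}_X$ yields the contradiction. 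Getting this step airtight is where the real work lies.

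Once this step is settled, $\Omega_\Gamma(X)=1$ if and only if $(\exists n)(\exists i\le k_n)(x^{n}_i\in X)$, which $\exists^2$ decides. The same argument works uniformly for all three items, since only the gauge property of $\mathbb{1}_X$ and membership of $\mathbb{1}_X$ in the class are used.
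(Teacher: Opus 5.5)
\paragraph{Verdict: there is a genuine gap.} The step you flag as ``where the real work lies'' is left open. Worse, the route you sketch cannot close it. No argument based on a \emph{single} cover of $[0,1]$ (or $[-n,n]$) works, whether you feed $\Theta$ the gauge $\delta/2$ or your modified $\Psi'$. The theorem must hold for \emph{every} realiser $\Theta$ and \emph{every} gauge returned by $\Phi_\Gamma$. So take $X=\{\frac12\}$ with $\delta(\frac12)=\frac1{100}$, $\delta(x)=|x-\frac12|$ for $0<|x-\frac12|<\frac1{100}$, and $\delta(x)=1$ otherwise.
\begin{itemize}
\item This is a legitimate $\frac12$-gauge for $\mathbb{1}_X$: for every $x\neq\frac12$ one has $|x-\frac12|\geq\min(\delta(x),\delta(\frac12))$.
\item $(\frac14,\frac34)$ is a valid output of $\Theta$ for $\delta/2$, since $B(\frac14,\frac12)\cup B(\frac34,\frac12)\supseteq[0,1]$.
\item $(\frac14)$ alone is a valid output for $\Psi'$, since $\Psi'(\frac14)=\delta(\frac14)=1$.
\end{itemize}
Neither cover contains the point of $X$. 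So the equivalence ``$X\cap[0,1]\neq\emptyset$ iff some $x_i\in X$'' is false in general. Your ``chain argument'' (re-applying $\Theta$ to $\delta$ ``restricted by $\mathbb{1}_X$'') is not specified precisely enough to rule this out.

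\paragraph{The missing idea: localise the cover.} The paper covers small rational intervals rather than $[0,1]$, and lets $\exists^2$ search over all of them:
\[
X\cap[0,1]\neq\emptyset \leftrightarrow (\exists p,q\in\Q\cap[0,1])(\exists i)\big[\Theta(\lambda x.\delta(x),[p,q])(i)\in X\big].
\]
Here $\Theta(\cdot,[p,q])$ is the rescaled realiser, whose points lie in $[p,q]$. For the nontrivial direction, let $y\in X\cap[0,1]$ and choose rationals $p\le y\le q$ with $[p,q]\subset B(y,\delta(y))$. Any cover point $x_i\in[p,q]$ with $x_i\notin X$ satisfies $|x_i-y|<\delta(y)$. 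The gauge property then forces $|x_i-y|\geq\delta(x_i)$, so $y\notin B(x_i,\delta(x_i))$. Hence $y$ must be covered by a cover point that lies in $X$.

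The smallness of $\delta(y)$, which defeats your global argument, is harmless once the interval sits inside $B(y,\delta(y))$. Because of the existential quantifier over $p,q$, you never need to know $\delta(y)$. With this fix your uniform treatment of all three items goes through. The paper handles the usco case slightly differently, using the same local trick to build an RM-code for $[0,1]\setminus C$.
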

\begin{proof}
For the first part, let $X\subset \R$ be any set with at most one element.  
Then $\mathbb{1}_{X}$ is regulated and consider the $1/2$-gauge $\delta(x):=\Phi_{\reg}(f, 2)(x)$.  
If $x_{0}\in X$, then $x_{0}\not \in B(y, \delta(y))$ for any $y\in B(x_{0}, \delta(x_{0}))\setminus \{x_{0}\}$, by the definition of gauge function.  
Hence, if there is $x_{0}\in X$, then $\Theta(\lambda x.\delta(x), \overline{B}(x_{0}, \delta(x_{0})/2))$ must include the former real to provide a covering of $\overline{B}(x_{0}, \delta(x_{0})/2))$.  
In particular, we have 
\[
X\ne \emptyset\asa  (\exists p, q\in [0,1]\cap\Q)(\exists i\in \N)( \Theta(\lambda x.\delta(x), [p, q])(i)\in X  ).
\]
As a result, the combination $\Theta+\Phi_{\reg}+\exists^{2}$ computes $\Omega_{b}$.  
\smallskip

For the second part, let $C\subset [0,1]$ be closed and consider the usco function $\mathbb{1}_{C}$.  
Define the $1/2$-gauge $\delta(x):= \Phi_{\usco}(\mathbb{1}_{C}, 2) (x)$ and the set $O:= [0,1]\setminus C$.
If $x_{0}\in O$, then Feferman's $\mu$ suffices to find $N_{0}\in \N$ such that $(B(x_{0}, \frac{1}{2^{N_{0}}})\cap \Q)\subset O$.  
However, the ball $B(x_{0}, \frac{1}{2^{N_{0}}})$ can still contain points of $C$.  
We now show how to decide if there are such points (using the gauge and any $\Theta$-functional).  Suppose $x_{1}\not \in O$ but $x_{1}\in B(x_{0}, \frac{1}{2^{N_{0}}})$.    
Then $x_{1}\not \in B(y, \delta(y)) $ for any $y\in [O\cap B(x_{1}, \delta(x_{1}))]$, by the definition of gauge function.  
Hence, the finite sequence $\Theta(\lambda x.\delta(x), \overline{B}(x_{1}, \delta(x_{1})/2)  )$ must include $x_{2}\not \in O$ to provide a covering of $\overline{B}(x_{1}, \delta(x_{1})/2))$.  In this way, we obtain the equivalence between 
$(\exists x \in B(x_{0}, \frac{1}{2^{N_{0}}}))(x \not\in O )$ and 
\[\textstyle
  (\exists p, q\in \Q\cap [0,1])(\exists i\in \N)\big[ [p, q]\subset B(x_{0}, \frac{1}{2^{N_{0}}})\wedge \Theta(\lambda x.\delta(x), [p, q])(i)\not \in O  \big].  
\]
where the centred formula is decidable using $\exists^{2}$.
Hence, we can `shrink' $N_{0}$ (if necessary) to $N_{1}$ to guarantee $B(x_{0}, \frac{1}{2^{N_{1}}})\subset O$.
Restricting to the rationals in $O$, we obtain an RM-code for $O$, i.e.\ $\Omega_{C}$ is obtained.  

\smallskip

For the third part, fix $X\in \bf \Delta$ and consider $\mathbb{1}_{X}$.  The latter is Baire 1 by \cite{overderooie}*{Lemma~11.6} and ambivalent `by definition'.  
Now consider the $1/2$-gauge $\delta(x):= \Phi_{\bf \Delta}(\mathbb{1}_{X}, 2) (x)$. 
As in the previous, $(\exists x\in [0,1])(x \in X)$ is equivalent to 
\[\textstyle
  (\exists p, q\in \Q\cap [0,1])(\exists i\in \N)\big[ \Theta(\lambda x.\delta(x), [p, q])(i) \in X  \big],   
\]
which immediately yields $\Omega_{\bf \Delta}$.
\qed
\end{proof}
We observe that given a realiser for Cousin's lemma, $\Omega_{\Gamma}$ and $\Phi_{\Gamma}$ line up nicely.  

\smallskip

Finally, as explored in \cite{zulie}, gauges can be taken to be usco, but the construction is far from elementary, say compared to Theorem \ref{trux4}.

 \section{Recursion and selectors}\label{slef}
In this section, we show that the structure functional $\Omega_{\bf \Delta}$ can compute RM-codes for $\bf \Delta$-sets. 
As a corollary, the former also computes a \emph{selector} for $\bf \Delta$-sets, i.e.\ a functional that outputs an element
from a non-empty $\bf \Delta$-set.  
 
\smallskip
 
We start with some observations and lemmas.   We use `$P$' as a variable ranging over the sets in $\bf\Delta$ without always explicitly mentioning this.  
\begin{observation} 
Let $\Phi$ be a functional that outputs an RM-code for $P$ given a set $P \in \bf\Delta$ as input.  Then $\Omega_{\bf\Delta}$ is computable in $\Phi$ and $\exists^2$. 
\end{observation}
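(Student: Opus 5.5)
Given $P\in\bf\Delta$, I will apply $\Phi$ to obtain an RM-code $(O_n,F_n)_{n\in\N}$ with $P=\cap_{n}O_n=\cup_{n}F_n$, where each $F_n$ is RM-closed, so that $\R\setminus F_n=\cup_{m\in\N}(a_{n,m},b_{n,m})$ for sequences of reals given by the code. Since $P$ is the union of the $F_n$, it suffices to decide, uniformly in $n$ and using $\exists^2$, whether $F_n\neq\emptyset$. Then $\Omega_{\bf\Delta}(P)=1$ if and only if $(\exists n\in\N)(F_n\neq\emptyset)$, and this existential quantifier over $\N$ is resolved by one further application of $\exists^2$.

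To decide $F_n\ne\emptyset$, I will note that $F_n\neq\emptyset$ holds iff there is $k\in\N$ with $F_n\cap[-k,k]\neq\emptyset$. This in turn holds iff $[-k,k]\not\subset\cup_{m}(a_{n,m},b_{n,m})$. By the (countable) Heine--Borel theorem, already used around \eqref{kafff}, the latter is equivalent to
\[\textstyle
(\forall m_0\in\N)\big([-k,k]\not\subset \cup_{m\leq m_0}(a_{n,m},b_{n,m})\big).
\]
Whether a compact interval is covered by finitely many given open intervals is decidable from the reals involved using $\exists^2$: sort the endpoints and compare them, where comparisons of reals are $\Sigma^0_1/\Pi^0_1$ statements. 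The whole condition is therefore arithmetical in the code, and $\exists^2$ decides it.

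An alternative route is the following. Lemma \ref{trux2} gives a Baire 1 representation of $\mathbb{1}_P$, and \cite{dagsamXIV}*{Theorem 2.6} then lets us compute $\sup_{x\in[-k,k]}\mathbb{1}_P(x)$, exactly as in the second paragraph of the proof of Theorem \ref{txno2}. I prefer the direct argument above, since it avoids the external citation.

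The only mildly delicate step is the decidability of finite coverings when the endpoints are arbitrary reals, possibly with $a_{n,m}\geq b_{n,m}$ (empty intervals). This is routine: discard the empty intervals, then check the chain condition on the sorted endpoints, which uses only arithmetical comparisons that $\exists^2$ decides. No Baire category argument or selection is needed, so I expect no real obstacle.
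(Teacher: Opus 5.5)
Your argument is correct, but it takes a different route from the paper. The paper gives no separate proof of this observation. The corresponding step is the second paragraph of the proof of Theorem \ref{txno2}, and it is exactly the ``alternative route'' you mention:
\begin{itemize}
\item the RM-code is first turned into a Baire 1 representation of $\mathbb{1}_P$ via Lemma \ref{trux2}, using distance functions to the RM-closed sets $F_n$ and $\R\setminus G_n$;
\item the external result that $\exists^2$ computes suprema of Baire 1 functions from a representation is then used to test whether $\sup_{x\in[-n,n]}\mathbb{1}_P(x)=1$.
\end{itemize}

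Your direct argument uses only the ${\bf F}_\sigma$ half of the code, together with the Heine--Borel trick behind \eqref{kafff} and behind the computation of $\sup C_n$ for $\Phi_4$. Non-emptiness of $P$ becomes $(\exists n,k)(\forall m_0)\,\big[\,[-k,k]\not\subset\cup_{m\le m_0}(a_{n,m},b_{n,m})\,\big]$. This is arithmetical in the code, indeed $\Sigma^0_2$, because coverability of $[-k,k]$ by finitely many open intervals is witnessed by a finite chain of strict inequalities between reals.

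What your route buys is a self-contained, elementary proof with an explicit complexity bound. It does not depend on Lemma \ref{trux2} or on the theorem about suprema of Baire 1 functions. What the paper's route buys is economy within its own structure: the observation comes for free from the chain $\Phi_0\to\Phi_1\to\Omega_{\bf \Delta}$, which the paper needs anyway for the items $\Phi_1$, $\Phi_{1b}$ and $\Phi_3$ of the cluster theorem.
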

The following lemma goes back to Baire, and is proved by an application of the Baire category theorem for closed subsets of $\R$.
\begin{lem}\label{lemma.baire} 
Let $X$ be closed and nonempty and let $P\in \bf \Delta$. Then there is an open set $O$ such that $O \cap X \neq \emptyset$ and either $O \cap X \subset P$ or $O \cap X \subset P^c$. 
\end{lem}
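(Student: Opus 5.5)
The plan is to write both $P$ and $P^{c}$ as countable unions of closed sets and apply the Baire category theorem inside the closed set $X$. Since $P\in\bf\Delta$, we have $P\in\bf F_{\sigma}$, so $P=\cup_{n\in\N}F_{n}$ with each $F_{n}$ closed. Also $P\in\bf G_{\delta}$, say $P=\cap_{n\in\N}O_{n}$ with each $O_{n}$ open, so $P^{c}=\cup_{n\in\N}(\R\setminus O_{n})$ is $\bf F_{\sigma}$ as well; write $H_{n}:=\R\setminus O_{n}$. Then
\[\textstyle
X=\bigcup_{n\in\N}(F_{n}\cap X)\cup\bigcup_{n\in\N}(H_{n}\cap X)
\]
expresses $X$ as a countable union of sets that are closed relative to $X$.

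Next I apply the Baire category theorem to $X$. As a nonempty closed subset of $\R$, $X$ is a complete metric space, so some member of this countable family has nonempty interior relative to $X$. Concretely, there is an open interval $O$ (rational endpoints may be assumed) with $O\cap X\neq\emptyset$ and either $O\cap X\subset F_{n}\cap X$ for some $n$, or $O\cap X\subset H_{n}\cap X$ for some $n$. In the first case $O\cap X\subset F_{n}\subset P$. In the second case $O\cap X\subset H_{n}\subset P^{c}$. This is exactly the dichotomy in the statement.

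I expect no real obstacle, since the argument is purely classical and the lemma asserts existence only, not computability. The only point needing care is that the Baire category theorem is applied to the relative topology on $X$, not to $\R$. The covering sets are relatively closed in $X$, and a relatively open nonempty subset of $X$ is of the form $O\cap X$ with $O$ open in $\R$. We therefore just need that $X$, being closed in the complete space $\R$, is itself complete.
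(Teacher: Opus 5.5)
Your proof is correct and takes the same route the paper indicates: the paper gives no details beyond attributing the lemma to Baire and saying it follows from the Baire category theorem for closed subsets of $\R$, and you carry this out by covering $X$ with the relatively closed pieces of ${\bf F}_{\sigma}$-decompositions of $P$ and $P^{c}$. You also correctly run the category argument in the relative topology of $X$, which is complete as a nonempty closed subset of $\R$.
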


We now let $(B_i)_{i \in \N}$ be a computable enumeration of all open intervals with rational endpoints, including the end-points $+\infty$ and $-\infty$. 
\bdefi\label{trandsfi}
By recursion on the ordinal $\alpha$, we define $A_\alpha \subseteq \N$ and the closed set $X_\alpha = \R \setminus \bigcup_{i \in A_\alpha}B_i$ as follows.
\begin{itemize}
\item We define $A_0 = \emptyset$ and for limit ordinals $\gamma$ we let $A_\gamma = \bigcup_{\alpha < \gamma} A_\alpha$.
\item We put $i \in A_{\alpha + 1}$ if $B_i \cap X_\alpha = \emptyset$ or if $B_i \cap X_\alpha\ne \emptyset$ and the latter set is contained either in $P$ or in $P^c$.
\end{itemize}
\edefi
Due to Lemma \ref{lemma.baire}, this transfinite recursion will terminate with empty $X_\alpha$ and, by construction, with $A_\alpha = \N$. Clearly each recursion step is computable in $P$, $P^c$ and $\Omega_{\bf\Delta}$. Further, the recursion will induce a pre-well-ordering (see Definition~\ref{preap}) of the final set $A_\alpha = \N$, and from this pre-well-ordering we can arithmetically define ${\bf F}_\sigma$-codes for $P$ and $P^c$, using $\Omega_{\bf\Delta} $ to tell us which new segments will go into $P$ and which will go into $P^c$.

\smallskip

It remains to prove that the aforementioned pre-well-ordering is computable in $\Omega_{\bf\Delta}$ and $\exists^2$. For this, we must introduce a few concepts.
\begin{defi}[Pre-well-orderings]\label{preap}~{
\begin{enumerate}
\item A \emph{pre-ordering} of a set $A \subseteq \N$ is a relation $\preceq$ on $A$ that is transitive and refexive. 
\item We write $a \simeq b$ if $a \preceq b$ and $b \preceq a$, and we write $a \prec b$ if $a \preceq b$ while it is not the case that $a \simeq b$.  If $a \prec b$ we will say that $a$ is below $b$. 
\item  A \emph{descending chain} is  a sequence $\{a_n\}_{n \in \N}$ such that $a_{n+1} \prec a_n$ for all $n$. 
\item A pre-ordering without descending chains is a \emph{pre-well-ordering}.
\end{enumerate}}
\end{defi}
All pre-orderings will have a maximal pre-well-ordered initial segment, consisting of those points below which there are no descending sequences. 

\smallskip

Now, we again consider the inductive definition as in Definition \ref{trandsfi}.  
\begin{defi}
A pre-ordering $(\N, \preceq)$ is \emph{agreeable} if the following is satisfied.
\begin{center}
 For each $a \in \N$, let $X_a = \R \setminus \bigcup_{b \prec a} B_b$. 
 Then either $B_a \cap X_a = \emptyset$ or $B_a \cap X_a \neq \emptyset$ and $B_a \cap X_a$ is either contained in $P$ or in $P^c$.
\end{center}
\end{defi}
This reflects the recursion step in the recursion described above.

\smallskip

By transfinite induction, we also see that given an agreeable pre-ordering, the pre-well-ordered initial segment will coincide with an initial segment of the one obtained from the recursion.  Finally, we see that $\exists^2$ and $\Omega_{\bf\Delta}$ can decide if a pair $(A, \preceq)$ is an agreeable pre-ordering in our sense.

\begin{lem}\label{lemma.one} 
If $(\N , \preceq)$ is an agreeable pre-ordering, it is actually the one and only agreeable pre-well-ordering on $\N$.
\end{lem}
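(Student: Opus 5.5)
The plan is to show that an agreeable pre-ordering has no descending chains, and then that any agreeable pre-well-ordering coincides with the pre-well-ordering $\preceq_{\rm rec}$ induced by the recursion of Definition~\ref{trandsfi}. For the latter, $a$ has rank $\alpha$ if $a\in A_{\alpha+1}\setminus A_\alpha$. I argue by contradiction and use Lemma~\ref{lemma.baire} as the key tool.

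\emph{Step 1 (no descending chains).} Suppose $(\N,\preceq)$ is agreeable. Let $W\subseteq\N$ be its maximal pre-well-ordered initial segment, and assume $W\neq\N$. By transfinite induction along $W$, the remark preceding the lemma tells us that $W$ is an initial segment of the recursion. So $X_W:=\R\setminus\bigcup_{b\in W}B_b$ equals some stage $X_\beta$ of the recursion, or lies between consecutive stages. Since $W\neq\N$, there is $i\notin W$, and hence $X_W\neq\emptyset$; indeed, if $X_W=\emptyset$, every $B_i$ would already be accounted for. Now apply Lemma~\ref{lemma.baire} to the closed non-empty set $X_W$. This gives a rational interval $B_i$ with $B_i\cap X_W\neq\emptyset$ that is contained in $P$ or in $P^c$.

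Since $i\notin W$, there is a descending chain $i\succ a_1\succ a_2\succ\cdots$ with all $a_n\notin W$. Agreeability at each $a_n$ says that $B_{a_n}\cap X_{a_n}$ is empty or homogeneous, where $X_{a_n}=\R\setminus\bigcup_{b\prec a_n}B_b\supseteq X_W$. The goal is to derive a contradiction. The natural attempt uses the fact that the sets $X_{a_n}$ increase as $n$ increases, since fewer elements lie below. The obstacle is to turn this into an impossibility. I would choose the chain so that it exhausts every element outside $W$ that lies below $i$, and use a Baire-category argument on $X_W$: the union over the chain of the homogeneous pieces must cover a nonempty portion of $X_W$, yet the pieces themselves only reflect information about sets $X_{a_n}$ larger than $X_W$. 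I expect this step to be the main obstacle. If the direct route fails, I would fall back on a minimal-counterexample formulation: compare $\preceq$ with $\preceq_{\rm rec}$, and take the least recursion rank $\alpha$ at which they disagree.

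\emph{Step 2 (uniqueness).} Let $\preceq$ be an agreeable pre-well-ordering. Prove by transfinite induction on $\alpha$ that the $\preceq$-rank of $a$ equals the recursion rank of $a$. Suppose this holds below $\alpha$. Then $\{b: b\prec a\}=A_\alpha$ for every $a$ of $\preceq$-rank $\alpha$, so $X_a=X_\alpha$. Agreeability then puts $a$ into $A_{\alpha+1}$.

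Conversely, let $a\in A_{\alpha+1}$, and suppose its $\preceq$-rank were larger than $\alpha$. Then $X_a\subsetneq X_\alpha$. One must argue that $\preceq$ places $a$ as early as possible. Agreeability alone seems to permit delaying $a$, so this direction needs care. Here I would reinterpret ``agreeable'' as also requiring that $a$ is placed at the first stage at which the condition holds, which matches the recursion ``$i\in A_{\alpha+1}$ iff \dots''. Under that reading the induction closes immediately, and uniqueness follows.
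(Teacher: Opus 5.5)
Your Step~1 never reaches its contradiction, and the route you sketch for it cannot work. Along the chain you only use agreeability in its literal form ($B_{a_n}\cap X_{a_n}$ empty or contained in $P$ or $P^c$). For that notion the statement is false: for $P=\R\in{\bf\Delta}$ every pre-ordering of $\N$ satisfies it, for instance the reverse of the usual order, in which $0,1,2,\dots$ is a descending chain. So no Baire-category refinement over a single chain can help. For the same reason, your justification of $X_W\neq\emptyset$ (``every $B_i$ would already be accounted for'') is not an argument under the literal definition.

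You do find the right fix in Step~2. Agreeability needs a clause saying that elements are placed as early as possible: for all $a,b$, if $a\preceq b$ and $B_b\cap X_a$ is empty or contained in $P$ or in $P^c$, then $b\preceq a$. The empty case must be included, since it is what handles $X_W=\emptyset$ below. The paper's proof uses exactly this clause implicitly (``since they both will satisfy the requirements of the recursion step''). The missing idea is to use it already in Step~1, where it makes the contradiction immediate.

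Take $\preceq$ total, as the recursion's pre-ordering is. Then every element of $W$ lies strictly below every $c\notin W$, hence $X_c\subseteq X_W$. Now choose $n$ as follows:
\begin{itemize}
\item if $X_W\neq\emptyset$, apply Lemma~\ref{lemma.baire} and shrink the open set to a rational interval around a point of $X_W$, giving $n$ with $B_n\cap X_W$ nonempty and contained in $P$ or in $P^c$;
\item if $X_W=\emptyset$, let $n$ be any element outside $W$.
\end{itemize}
In both cases $n\notin W$. For every $c\notin W$ the set $B_n\cap X_c\subseteq B_n\cap X_W$ is again empty or homogeneous. Hence $n\preceq c$: trivially if $n\prec c$, and by the clause otherwise. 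But $n\notin W$ gives some $c\notin W$ with $c\prec n$, a contradiction. So $W=\N$, and your Step~2 induction, which is fine under the strengthened definition, gives uniqueness.

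This is the paper's argument. It picks the same $B_n$ via Lemma~\ref{lemma.baire}, but compares $n$ with every $c$ outside the well-founded part rather than trying to extract a contradiction from the members of one descending chain.
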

\begin{proof} Assume not, and let $A^w$ be the domain of the pre-well-ordered initial segment. Let $X^w = \R \setminus \bigcup_{a \in A^w}B_a$. We must have that $X^w \neq \emptyset$ since otherwise the induction would have stopped at each point in the non-pre-well-ordered part. Choose $n$ such that $B_n \cap X^w \neq \emptyset$ and is contained in either $P$ or $P^c$. Now, let $c \in \N \setminus A^w$. Then either $B_n \cap X_c = \emptyset$ or $n \simeq c$ (since they both will satisfy the requirements of the recursion step). Since $c \in \N \setminus A^w$ was arbitrary, it follows that $B_n \cap X_c = \emptyset$ for all $c \in \N \setminus A^w$, contradicting the choice of $n$.
\qed
\end{proof}
The following result is the non-trivial direction in Theorem \ref{txno2}.  Here $\Omega_1$ is the functional selecting the one element in a singleton set.  
The functional $\Omega_1$ is computable in all the structure functionals $\Omega_{\Gamma}$ we have considered so far.
\begin{thm}\label{5.11} 
There is a functional $\Phi$ which is computable in $\Omega_{\bf\Delta}+\exists^2$ and such that $\Phi(X)$ is an RM-code for any $X\in \bf \Delta$.
\end{thm}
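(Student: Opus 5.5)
The plan is to compute, from $P\in\bf\Delta$ and using only $\Omega_{\bf\Delta}+\exists^2$, the unique agreeable pre-well-ordering $\preceq$ on $\N$ from Lemma~\ref{lemma.one}. From it we then read off ${\bf F}_\sigma$-codes for $P$ and $P^c$ arithmetically. Dualising the ${\bf F}_\sigma$-code of $P^c$ gives the ${\bf G}_\delta$-part, so together these form an RM-code for $P$.

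\textbf{Reading off the codes.} Suppose the agreeable pre-well-ordering $\preceq$ is given. For each $a\in\N$ the set $X_a\cap B_a$ is the intersection of the closed set $X_a=\R\setminus\bigcup_{b\prec a}B_b$ with an interval. The set $\bigcup_{b\prec a}B_b$ is RM-open, uniformly in $a$, with $\exists^2$ supplying the index set $\{b: b\prec a\}$. Each $X_a\cap B_a$ is ambivalent. Hence $\Omega_{\bf\Delta}$ applied to $X_a\cap B_a\cap P$ and to $X_a\cap B_a\cap P^c$ tells us which alternative of the recursion step holds.

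Every real $x$ lies in exactly one ``layer'' $X_a\cap B_a$ for a $\preceq$-minimal $a$ with $x\in B_a$. This uses that $\preceq$ is well-founded and that the recursion exhausts $\R$. Thus $P$ is the union over those $a$ with $X_a\cap B_a\subseteq P$ of $X_a\cap B_a$. Each such set is ${\bf F}_\sigma$ with an RM-code computable by $\exists^2$: intersect $X_a$ with the closed intervals $[p,q]\subset B_a$, $p,q\in\Q$. The same construction applied to $P^c$ gives its code.

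\textbf{Producing $\preceq$.} This is the main obstacle, since a pre-well-ordering is a real-like object of type $1$, and it has to be found without $\exists^3$. I would use the uniqueness in Lemma~\ref{lemma.one} together with the observation that ``$(\N,\preceq)$ is agreeable'' is decidable in $\Omega_{\bf\Delta}+\exists^2$. That decision uses one $\Omega_{\bf\Delta}$ query per $a$, each on an ambivalent set built from $P$, $B_a$ and an RM-open set.

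The unique agreeable $\preceq$ is then a type-$1$ object singled out by a predicate computable in $\Omega_{\bf\Delta}+\exists^2$. I would compute it bit by bit. To decide whether a pair $(a,b)$ satisfies $a\preceq b$, we must test whether some agreeable $\preceq'$ has $a\preceq' b$; by uniqueness this is the same as testing $\forall\preceq'$ (agreeable $\to$ $a\preceq' b$). This is a $\Sigma^1_1$/$\Pi^1_1$ question over the predicate, which is not directly available, so I would instead build $\preceq$ by an effective transfinite recursion along the recursion of Definition~\ref{trandsfi}. The tool for this is Kleene recursion, i.e.\ the fixed point operators of S1--S9 relative to $\Omega_{\bf\Delta}+\exists^2$.

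At stage $\alpha$ we have the index set $A_\alpha$ as a type-$1$ object. The next set $A_{\alpha+1}$ is computed from $A_\alpha$ by asking, for each $i$, $\Omega_{\bf\Delta}$ whether $B_i\cap X_\alpha$ is empty or meets both $P$ and $P^c$. The recursion is then run along countable ordinals coded by well-orderings on $\N$.

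To avoid needing a well-ordering of the right length in advance, I would proceed as follows. Search, via a fixed point over the tree of partial agreeable pre-orderings, for the shortest approximations. Use Lemma~\ref{lemma.baire} to guarantee that each stage adds a new index while $X_\alpha\neq\emptyset$. Use Lemma~\ref{lemma.one} to guarantee that the limit object is agreeable and hence the correct one.

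Termination of this Kleene computation is proved by induction on the (countable) length of the recursion. I expect the delicate point to be arguing that the computation tree is well-founded. This should follow from $X_\alpha$ strictly shrinking at each stage, again by Lemma~\ref{lemma.baire}, combined with the countability of $\N$.
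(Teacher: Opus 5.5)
\textbf{There is a genuine gap, in producing $\preceq$.} Your first step matches the paper and is fine. That step reads off ${\bf F}_\sigma$-codes for $P$ and $P^c$ from the agreeable pre-well-ordering, using $\Omega_{\bf\Delta}$ on $X_a\cap B_a\cap P$ and $X_a\cap B_a\cap P^c$. One small correction: a real can lie in several layers $X_a\cap B_a$ with $a$ in one $\simeq$-class, not exactly one. These layers agree on $P$ versus $P^c$, so nothing breaks.

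The gap comes where you reduce the bit question ``$a\preceq b$?'' to ``is there an agreeable $\preceq'$ with $a\preceq' b$?'' and then discard it as an unavailable $\Sigma^1_1$ question. It is directly available. Code pre-orderings on $\N$ as reals, e.g.\ via the Cantor set in $[0,1]$, with $\exists^2$ handling the decoding. Let $S_{a,b}$ be the set of codes of agreeable $\preceq'$ with $a\preceq' b$. Its characteristic function is computable in $\Omega_{\bf\Delta}+\exists^2$, because agreeability is decidable there, as you note yourself. By Lemma~\ref{lemma.one}, $S_{a,b}$ has at most one element. A set with at most one element is closed, hence in ${\bf\Delta}$, so $\Omega_{\bf\Delta}(S_{a,b})$ answers the question outright.

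This is exactly the paper's proof. $\Omega_{\bf\Delta}+\exists^2$ computes $\Omega_1$, the selector for singletons: restricting a singleton to a rational interval or a digit condition gives an empty or one-point set, which $\Omega_{\bf\Delta}$ can test. Applying $\Omega_1$ to the singleton class of agreeable pre-orderings yields $\preceq$. The missing idea in your proposal is this observation that ``at most one element'' makes the class ambivalent.

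\textbf{Your replacement is not a proof.} The recursion of Definition~\ref{trandsfi} has length an arbitrary countable ordinal depending on $P$. A Kleene computation can only recurse along well-founded data it actually has access to, and you have no code for a well-ordering of the right length. Having a computable successor step does not make the iteration computable. For example, iterating a monotone arithmetical operator to its closure can give a complete $\Pi^1_1$ set, which is not computable in $\exists^2$.

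``Searching, via a fixed point, over the tree of partial agreeable pre-orderings'' is not a defined procedure. It is also a search over type-1 objects, which S1--S9 relative to $\exists^2$ cannot perform. The only available tool for such a search is $\Omega_{\bf\Delta}$, and using it requires showing the searched set is ambivalent, which is precisely the singleton argument above.

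Finally, Lemma~\ref{lemma.baire} and the shrinking of $X_\alpha$ only show that the \emph{mathematical} recursion closes off at a countable stage. They establish neither the well-foundedness nor the totality of the computation you describe, and that is the point you yourself flag as delicate and leave open.
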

\begin{proof}
The class of agreeable pre-orderings is computable in $\Omega_{\bf\Delta}$ and $\exists^2$. By Lemma \ref{lemma.one} there is exactly one agreeable pre-ordering, and that is the one obtained by the transfinite recursion. If we apply $\Omega_1$ to this class we obtain this pre(well)ordering, and from that one we read off the ${\bf F}_\sigma$-codes for $P$ and $P^c$.
\qed
\end{proof}
\begin{cor} We have the following. 
\begin{enumerate}
\renewcommand{\theenumi}{\alph{enumi}}
\item There is a selector operator for $\bf \Delta$-sets computable in $\Omega_{\bf\Delta}$.\label{za}
\item The functional $\Omega_{\rm countable}$,  and thus $\Omega_{{\bf F}_\sigma}$, is not computable in $\Omega_{\bf\Delta} +\exists^2$.\label{zb}
\end{enumerate}
\end{cor}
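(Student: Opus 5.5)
For item (a), I would use Theorem \ref{5.11} to obtain, uniformly and computably in $\Omega_{\bf\Delta}+\exists^2$, an RM-code $X=\bigcup_{n\in\N}F_n$ of a given non-empty $X\in\bf\Delta$. With $\exists^2$ I can then find the least $n$ with $F_n\neq\emptyset$. This is an arithmetical question about the RM-code: $F_n$ is non-empty iff the complementary union of rational intervals fails to cover some compact interval $[-k,k]$. By Heine--Borel, that failure is a finite-cover condition. For this $n$, $\exists^2$ computes an element of the non-empty RM-closed set $F_n$, for instance $\sup(F_n\cap[-k,k])$ for the least suitable $k$, obtained by interval halving as in the proof of Lemma \ref{trux2}. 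The selector is thus a composition of computable steps. Strictly, $\exists^2$ is used alongside $\Omega_{\bf\Delta}$, consistent with the paper's standing convention that $\exists^2$ is always available. Alternatively, one can avoid codes and run interval halving directly, using $\Omega_{\bf\Delta}$ on the ambivalent sets $X\cap[p,q]$, which are ambivalent because $\bf\Delta$ is closed under intersection with closed intervals. This yields a nested sequence of rational intervals meeting $X$. The catch is that the limit need not lie in $X$ unless $X$ is closed, so I would prefer the code-based argument.

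For item (b), the plan is a countability/relativisation argument. I would first show that every ambivalent set computable-in-context is harmless. The key observation is that the whole functional $\Omega_{\bf\Delta}+\exists^2$ can be simulated below a suitable type-2 object, together with $\exists^2$, relative to a parameter. The approach has three steps. First, by Theorem \ref{5.11} and its proof, for $X\in\bf\Delta$ the value $\Omega_{\bf\Delta}(X)$ is determined arithmetically, via $\exists^2$, by the RM-code $\Phi(X)$. Second, the code $\Phi(X)$ is the unique agreeable pre-well-ordering, which is hyperarithmetical/$\Delta^1_1$ in any real coding enough information about $X$ along the recursion. Third, I would argue by an S1--S9 induction on computations that every computation in $\Omega_{\bf\Delta}+\exists^2$ can be carried out by $\SSS^2$, or some $\SSS^2_k$, together with $\exists^2$. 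In that argument I would replace each call $\Omega_{\bf\Delta}(X)$ by the decision ``$\exists^2$ applied to a code produced by the hyperjump''.

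By contrast, the paper's list in the introduction (the uncountability of $\R$, \cites{dagsamX,dagsamXI}) indicates that $\Omega_{\rm countable}$ is not computable in any $\SSS^2_k$, even with $\exists^2$. Since $\Omega_{\rm countable}$ reduces to $\Omega_{{\bf F}_\sigma}$, because countable sets are ${\bf F}_\sigma$ and any set with at most countably many elements can be enumerated using the ${\bf F}_\sigma$ structure functional, the second claim follows from the first.

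The main obstacle is the third step. The input sets $X$ to $\Omega_{\bf\Delta}$ during a computation are arbitrary type-2 objects, not given by reals, so the transfinite recursion of Definition \ref{trandsfi} must be shown to have countable length below a countable ordinal bounded uniformly. It must also be decidable using a Suslin-type functional evaluated on the oracle $X$ itself. What I would try first is the standard technique from \cites{dagsamX,dagsamXIII}: build a countable model (the relevant sub-computations lie in $L_{\omega_1}$-like structures closed under $\SSS^2$). Inside that model I would use the Baire-category property from Lemma \ref{lemma.baire} to argue that emptiness of an ambivalent $X$ is witnessed by points computable from the model's data. I would then show that a non-empty countable set can be constructed whose emptiness the simulated computation cannot detect.
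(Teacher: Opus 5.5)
Your proof of (a) is correct and matches the paper's: take the ${\bf F}_\sigma$-half of the RM-code from Theorem~\ref{5.11}, use $\exists^2$ and Heine--Borel to locate the first closed piece meeting some $[-k,k]$, and extract an extremal point. The paper takes the least element, you take the supremum.

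Part (b), however, has a genuine gap. Your third step asks that every computation in $\Omega_{\bf\Delta}+\exists^2$ be carried out by $\SSS^2_k+\exists^2$. Since $\Omega_{\bf\Delta}$ is trivially computable in $\Omega_{\bf\Delta}+\exists^2$, this would make $\Omega_{\bf\Delta}$ computable in a type-two functional. That is false, for exactly the reason $\Omega_{\rm countable}$ is not computable in any $\SSS^2_k$. A computation from type-two objects on input $\mathbb{1}_{\emptyset}$ has a countable computation tree, so it inspects its input at only countably many reals. It therefore gives the same output on $\mathbb{1}_{\{x\}}$ for any $x$ outside that countable set, yet $\emptyset$ and $\{x\}$ both lie in $\bf\Delta$. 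So the fact you cite holds equally for $\Omega_{\bf\Delta}$ and cannot separate the two functionals.

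Your first two steps are circular for the same reason. No real ``coding enough information about $X$'' can be extracted from the characteristic function $X$ by type-two means. Each stage of the recursion in Definition~\ref{trandsfi} asks whether $B_i\cap X_\alpha$ is contained in $P$ or in $P^c$. That question concerns uncountably many values of $P$, and it is answered precisely by $\Omega_{\bf\Delta}$. If instead the parameter may depend on $X$, the simulation is non-uniform and the cited non-computability result says nothing.

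The missing idea is that (b) follows from (a), which your plan never uses. The paper argues in one line: whenever $\Omega_\Gamma$ computes a selector for its own class $\Gamma$, it cannot compute $\Omega_{\rm countable}$. The selector is what controls the type-three queries. Suppose changing the input flips the answer of $\Omega_{\bf\Delta}$ on some set arising in the computation. Applying the selector to the non-empty version yields a single point, computable from the oracles and the input, at which the two versions differ. This is what makes a countable-support style argument feasible at all.

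Any separating argument must also use countable sets \emph{outside} $\bf\Delta$, for instance countable dense sets, which are not ${\bf G}_\delta$ by the Baire category theorem. On countable sets in $\bf\Delta$, the functionals $\Omega_{\bf\Delta}$ and $\Omega_{\rm countable}$ coincide, so singleton-based arguments are useless here. Your closing sketch with countable models addresses neither this point nor how queries to $\Omega_{\bf\Delta}$ with answer $0$ are handled, which is where the real difficulty lies.

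Finally, the passage to $\Omega_{{\bf F}_\sigma}$ needs no enumeration argument: $\Omega_{\rm countable}$ is simply the restriction of $\Omega_{{\bf F}_\sigma}$ to countable sets.
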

\begin{proof} 
Given $P\in \bf \Delta$, $\Omega_{\bf\Delta}$ provides an ${\bf F}_\sigma$-code for an increasing  sequence $(P_k)_{k \in \N}$ of closed sets with $P$ as the union. When $P$ is non-empty, we extract the least element in the first non-empty $P_k\cap [-k,k]$. This proves item \ref{za} from the theorem. Item \ref{zb} also follows because whenever a functional $\Omega_\Gamma$ provides a selector for $\Gamma$, then $\Omega_\Gamma$ cannot compute $\Omega_{\rm countable}$. 
\qed
\end{proof}

\begin{credits}
\subsubsection*{\ackname} 
The research of the second author was
supported by the Klaus Tschira Boost Fund via the grant Projekt KT 43. We express our gratitude
towards the latter institution.

\subsubsection*{\discintname}
To the best of our knowledge, we have no competing interests to declare that are
relevant to the content of this article. 
\end{credits}

\section*{References}

 \end{document}